\theoremstyle{plain}
\newtheorem{theorem}{Theorem}[section]
\newtheorem{lemma}[theorem]{Lemma}
\newtheorem{proposition}[theorem]{Proposition}
\newtheorem{corollary}[theorem]{Corollary}
\theoremstyle{definition}
\newtheorem{notation}[theorem]{Notation}
\newtheorem{example}[theorem]{Example}
\newtheorem{definition}[theorem]{Definition}
\theoremstyle{remark}
\newtheorem{remark}[theorem]{Remark}
\def\g{\gamma}
\def\G{\Gamma}
\def\vt{\vartheta}
\def\vn{\varnothing}
\newcommand{\sis}[2]{^{#1}\!\zeta_{#2}}
\begin{document}


\title[Affine Near-Semirings over Brandt Semigroups]{Affine Near-Semirings over Brandt Semigroups}

\author[Jitender Kumar, K. V. Krishna]{Jitender Kumar and  K. V. Krishna}
\address{Department of Mathematics, Indian Institute of Technology Guwahati, Guwahati, India}
\email{\{jitender, kvk\}@iitg.ac.in}


\begin{abstract}
In order to study the structure of $A^+(B_n)$ -- the affine near-semiring over a Brandt semigroup -- this work completely characterizes the Green's classes of its semigroup reducts. In this connection, this work classifies the elements of $A^+(B_n)$ and reports the size of $A^+(B_n)$. Further, idempotents and regular elements of the semigroup reducts of $A^+(B_n)$ have also been characterized and studied some relevant semigroups in $A^+(B_n)$.
\end{abstract}

\subjclass[]{16Y30, 16Y60, 20M10}

\keywords{Semigroup structure, Near-semiring, Green's relations, Brandt semigroup, Affine maps}

\maketitle


\section*{Introduction}

An algebraic structure $(S, +, \cdot)$ with two binary operations $+$ and $\cdot$ is said to be a near-semiring if $(S, +)$ and $(S, \cdot)$ are semigroups and $\cdot$ is one-side, say left, distributive over $+$, i.e. $a(b + c) = ab + ac$, for all $a,b,c \in S$. Typical examples of near-semirings are of the form $M(\Gamma)$, the set of all mappings on a semigroup $\Gamma$, and certain subsets of $M(\G)$. If $\G$ is a group, then $M(\G)$ is endowed with the structure called near-ring \citep{b.pilz}.

\cite{a.hoorn67} have introduced the concept of near-semirings as a generalization of near-rings and established some fundamental properties. Several authors have studied near-semirings in various aspects. Some authors have considered studying the algebraic structure of near-semirings (e.g. \cite{a.albert76,a.kvk07a,a.hoorn70,a.weinert82}) and others utilized the concept in various applications (e.g. \cite{jules08}). Recently, \cite{a.gilbert10a,a.gilbert10b} have studied the classes of endomorphism near-semirings over Clifford semigroups and Brandt semigroups.

An affine mapping over a vector space is a sum of a linear transformation and a constant map. \cite{bl56} studied the near-ring of affine mappings over a vector space. An abstract notion of affine near-rings is introduced by  \cite{gh64}. Authors have considered the study of affine near-rings in different contexts (e.g. \cite{shalom85,mal69}). \cite{a.hol83,a.hol84} studied affine near-rings, in the context of linear sequential machines. These notions are extended to near-semirings by \cite{kvk05a}. Further, \cite{kvk05b}  have studied affine near-semirings over generalized linear sequential machines. A mapping on a semigroup $(\Gamma, +)$ is said to be an affine map if it can be written as a sum of an endomorphism and a constant map. The subnear-semiring of $M(\G)$ generated by the set of affine maps is an affine near-semiring over $\G$, denoted by $A^+(\G)$.

In this work, we study $A^+(B_n)$ -- the affine near-semiring over a Brandt semigroup $B_n$. In this connection, we report the size of $A^+(B_n)$ and study the structural properties of semigroup reducts of $A^+(B_n)$ via Green's relations. Other than the introduction, the paper has been organized in five sections. Section 1 provides a necessary background material. In Section 2, we conduct a systematic study to classify the elements of $A^+(B_n)$ and finally report its size. Sections 3 and 4 are devoted to give the structures of additive and multiplicative semigroups of $A^+(B_n)$, respectively. In the respective sections, for both the semigroups we provide complete characterizations of Green's classes and their sizes. Further, we characterize regular and idempotent elements of both the semigroups and study some relevant subsemigroups. Finally, in Section 5, we illustrate our results on the affine near-semiring $A^+(B_2)$.

\section{Preliminaries}

In this section, we provide a necessary background material and fix our notation. For more details one may refer to \citep{a.gilbert10b,kvk05a}.

\begin{definition}
An algebraic structure $(S, +, \cdot)$ is said to be a \emph{near-semiring} if
\begin{enumerate}
\item $(S, +)$ is a semigroup,
\item $(S, \cdot)$ is a semigroup, and
\item $a(b + c) = ab + ac$, for all $a,b,c \in S$.
\end{enumerate}
\end{definition}

In this work, unless it is required, algebraic structures (such as semigroups, groups, near-semirings) will simply be referred by their underlying sets without explicit mention of their operations. Further, we write an argument of a function on its left, e.g. $xf$ is the value of a function $f$ at an argument $x$.

\begin{example}
Let $(\Gamma, +)$ be a semigroup and $M(\Gamma)$ be the set of all mappings on $\Gamma$. The algebraic structure $(M(\G), +, \circ)$ is a near-semiring, where $+$ is point-wise addition and $\circ$ is composition of mappings, i.e., for $\gamma \in \Gamma$ and $f,g \in M(\Gamma)$,
$$\g(f + g)= \g f + \g g \;\;\;\; \text{and}\;\;\;\; \g(f \circ g) = (\g f)g.$$ Also, certain subsets of $M(\G)$ are near-semirings. For instance, the set $M_c(\Gamma)$ of all constant mappings on $\Gamma$ is a near-semiring with respect to the above operations so that $M_c(\Gamma)$ is a subnear-semiring of $M(\G)$.
\end{example}

Given a semigroup $(\G, +)$, the set $End(\G)$ of all endomorphisms over $\G$ need not be a subnear-semiring of $M(\G)$.  The \emph{endomorphism near-semiring}, denoted by $E^+(\G)$, is the subnear-semiring generated by $End(\G)$ in $M(\G)$. Indeed, the subsemigroup of $(M(\G), +)$ generated by $End(\G)$ equals $(E^+(\G), +)$. If $(\G, +)$ is commutative, then $End(\G)$ is a subnear-semiring of $M(\G)$ so that $End(\G) = E^+(\G)$. \cite{a.gilbert10b}  have studied endomorphism near-semirings over Brandt semigroups.

\begin{definition}
For any integer $n \geq 1$, let $[n] = \{1,2,\ldots,n\}$. The semigroup $(B_n, +)$, where $B_n = ([n]\times[n])\cup \{\vartheta\}$ and the operation $+$ is given by
\[ (i,j) + (k,l) =
                \left\{\begin{array}{cl}
                (i,l) & \text {if $j = k$;}  \\
                \vartheta     & \text {if $j \neq k $}
                  \end{array}\right.  \]
and, for all $\alpha \in B_n$, $\alpha + \vartheta = \vartheta + \alpha = \vartheta$,
is known as \emph{Brandt semigroup}. Note that $\vartheta$ is the (two sided) zero element in $B_n$.
\end{definition}

Gilbert and Samman have studied the structure of additive semigroup of near-semiring $E^+(B_n)$ via Green's relations. Further, they have reported the sizes of $E^+(B_n)$, for $n \leq 6$, using the computer algebra package GAP. The following concept plays a vital role in their work.

\begin{definition}
Let $(\Gamma, +)$ be a semigroup with zero element $\vartheta$. For  $f \in M(\Gamma)$, the \emph{support of $f$}, denoted by supp$(f)$, is defined by the set
\[ {\rm supp}(f) = \{\alpha \in \Gamma \;|\; \alpha f \neq \vartheta\}.\]
A function $f \in M(\Gamma)$ is said to be of $k$-\emph{support} if the cardinality of supp$(f)$ is $k$, i.e. $|{\rm supp}(f)| = k$. If $k = |\Gamma|$ or $k = 1$, then $f$ is said to be of \emph{full support} or \emph{singleton support}, respectively.
\end{definition}

Note that the notion of full support maps used in \citep{a.gilbert10b} is too restrictive and is limited to the elements of $E^+(B_n)$ whose support size is at most $n$. Whereas, the above definition can be adopted to any map.

\begin{notation}
For $X \subseteq M(\Gamma)$, we write $X_k$ to denote the set of all mappings of $k$-support in $X$, i.e.
$$ X_k = \{ f \in X \mid f \; \text{is of $k$-support} \}.$$
\end{notation}

\begin{lemma}\label{r2}
If $f,g \in M(\G)$, then ${\rm supp}(f+g) \subseteq {\rm supp}(f)\cap {\rm
supp}(g)$.  Moreover,  for  $f \in M(\Gamma)_k$, we have   $|{\rm
supp}(f+g)| \leq k$ and $|{\rm supp}(g + f)| \leq k$.
\end{lemma}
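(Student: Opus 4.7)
The plan is to unpack the definitions pointwise. First I would verify the containment ${\rm supp}(f+g) \subseteq {\rm supp}(f) \cap {\rm supp}(g)$ by a contrapositive argument: pick any $\alpha \in \Gamma$, and note that by the pointwise definition of addition in $M(\Gamma)$ we have $\alpha(f+g) = \alpha f + \alpha g$. If either $\alpha f = \vartheta$ or $\alpha g = \vartheta$, then since $\vartheta$ is a (two-sided) zero for $+$ in $\Gamma$, the sum $\alpha f + \alpha g$ collapses to $\vartheta$, so $\alpha \notin {\rm supp}(f+g)$. Equivalently, membership $\alpha \in {\rm supp}(f+g)$ forces both $\alpha f \neq \vartheta$ and $\alpha g \neq \vartheta$, giving the claimed inclusion.

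The moreover clause then follows immediately from the first part together with monotonicity of cardinality under inclusion. If $f \in M(\Gamma)_k$, then $|{\rm supp}(f)| = k$, and since
\[{\rm supp}(f+g) \subseteq {\rm supp}(f) \cap {\rm supp}(g) \subseteq {\rm supp}(f),\]
we conclude $|{\rm supp}(f+g)| \leq k$. The same argument applied with the order of the summands reversed (using the already-proved first statement for the pair $(g,f)$) yields ${\rm supp}(g+f) \subseteq {\rm supp}(g) \cap {\rm supp}(f) \subseteq {\rm supp}(f)$, hence $|{\rm supp}(g+f)| \leq k$.

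There is no real obstacle here; the only point worth stating carefully is the use of $\vartheta$ being absorbing for $+$, which is precisely what licenses the implication ``$\alpha f = \vartheta$ or $\alpha g = \vartheta$ $\Rightarrow$ $\alpha(f+g) = \vartheta$.'' Everything else is a routine unfolding of the definitions of support and of pointwise addition in $M(\Gamma)$.
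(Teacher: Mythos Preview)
Your proof is correct and is exactly the routine verification the paper has in mind; the paper itself simply writes ``Straightforward.'' The only implicit hypothesis you use---that $\vartheta$ is a two-sided zero in $(\Gamma,+)$---is indeed assumed in the definition of support, so there is no gap.
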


\begin{proof}
Straightforward.
\end{proof}

The present work on affine near-semirings is inspired by the work of Gilbert and Samman on endomorphism near-semirings.
Now, we recall the notion of affine maps and affine near-semirings from \citep{kvk05a}. Let $(\G, +)$ be a semigroup. An element $f \in M(\G)$ is said to be an \emph{affine map} if $f = g + h$, for some $g \in End(\G)$ and $h \in M_c(\G)$. The set $\text{Aff}(\G)$  of all affine mappings over $\G$ need not be a subnear-semiring of $M(\G)$. The \emph{affine near-semiring}, denoted by $A^+(\G)$, is the subnear-semiring generated by $\text{Aff}(\G)$ in $M(\G)$. Indeed, the subsemigroup of $(M(\G), +)$ generated by $\text{Aff}(\G)$ equals $(A^+(\G), +)$ (cf. \cite[Corollary 1]{kvk05b}). If $(\G, +)$ is commutative, then $\text{Aff}(\G)$ is a subnear-semiring of $M(\G)$ so that $\text{Aff}(\G) = A^+(\G)$.

For $\alpha \in \G$, the constant map on $\G$ which sends all the elements of $\G$ to $\alpha$ is denoted by $\xi_\alpha$, i.e. $\gamma\xi_\alpha = \alpha$ for all $\gamma \in \G$. For $X \subseteq \G$, we write \[\mathcal{C}_X = \{\xi_\alpha \in M(\G) \mid \alpha \in X\}.\] Note that $\mathcal{C}_{\G} = M_c(\G)$. If we are not specific about the constant image, we may simply write $\xi$ to denote a constant map. We write the set of idempotent elements of $\G$ by $I(\G)$. Note that $I(B_n) = \{(k, k) : k \in [n]\} \cup \{\vt \}$.

\section{The elements of $A^+(B_n)$}

In this section, we carry out a systematic study through two subsections to characterize the elements of $A^+(B_n)$ and find the size of $A^+(B_n)$. First we study the elements of $End(B_n)$ and ${\rm Aff}(B_n)$ in Subsection 2.1. Then, in Subsection 2.2, we obtain a main result on classification and the number of elements of $A^+(B_n)$ (cf. Theorem \ref{t2}).

\subsection{$End(B_n)$ and ${\rm Aff}(B_n)$}\label{si-eb}

We find the size of $End(B_n)$ by generalizing the corresponding result for $End_\vartheta(B_n)$ in  \citep{a.gilbert10b}. They have shown that the monoid \[End_\vartheta(B_n) = \{f \in End(B_n)\; |\; \vt f = \vt \},\] with respect to composition of mappings, is isomorphic to the monoid $S_n^{0}  = S_n \cup \{0\}$, where the symmetric group $S_n$ of degree $n$ is adjoined by the zero element $0$ (cf. \cite[Proposition 2.2]{a.gilbert10b}). Thus, it is clear that $|End_\vartheta(B_n)| = n! + 1$. Now, we extend this result to $End(B_n)$ and find its cardinality in Theorem \ref{t1}.

Let $f \in Aut(B_n)$, the set of all automorphisms over $B_n$. Clearly $\vartheta f = \vartheta$. Then, from the proof of \cite[Proposition 2.2]{a.gilbert10b}, there exists a permutation $\sigma \in S_n$ such that $(i,j)f = (i\sigma, j\sigma)$. Further, for any permutation  $\sigma \in S_n$, the mapping $\phi_\sigma : B_n \rightarrow B_n$ such that $(i,j) \mapsto (i\sigma,j\sigma)$, $\vt \mapsto \vt$ is an automorphism over $B_n$. Now, it can be observed that the assignment
\[\sigma \mapsto \phi_\sigma: S_n \rightarrow  Aut(B_n)\]
is an isomorphism. Thus, we have the following proposition.

\begin{proposition}\label{bn-iso-sn}
$Aut(B_n)$ is isomorphic to $S_n$.
\end{proposition}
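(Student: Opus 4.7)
The plan is to verify that the map $\Phi : S_n \to Aut(B_n)$ given by $\sigma \mapsto \phi_\sigma$, which has already been set up in the paragraph preceding the proposition, is a bijective group homomorphism. Essentially the proof amounts to assembling four routine checks: well-definedness of $\phi_\sigma$ as an automorphism, injectivity of $\Phi$, surjectivity of $\Phi$, and the homomorphism property.

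First I would confirm that $\phi_\sigma \in Aut(B_n)$ for each $\sigma \in S_n$. Bijectivity of $\phi_\sigma$ is immediate, as its inverse is manifestly $\phi_{\sigma^{-1}}$. For the additive homomorphism property I would split on the two cases in the definition of addition in $B_n$: if $j = k$, then $((i,j)+(k,l))\phi_\sigma = (i,l)\phi_\sigma = (i\sigma, l\sigma)$, which equals $(i\sigma, j\sigma)+(k\sigma, l\sigma) = (i,j)\phi_\sigma + (k,l)\phi_\sigma$ since $j\sigma = k\sigma$; and if $j \neq k$, then $j\sigma \neq k\sigma$ (as $\sigma$ is a bijection), so both sides are $\vt$. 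The cases involving $\vt$ are trivial because $\vt\phi_\sigma = \vt$.

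Surjectivity of $\Phi$ is precisely what is argued in the paragraph preceding the proposition: any $f \in Aut(B_n)$ must fix the unique zero $\vt$, and by the proof of Proposition~2.2 of Gilbert--Samman there exists $\sigma \in S_n$ such that $(i,j)f = (i\sigma, j\sigma)$, so $f = \phi_\sigma$. Injectivity is equally quick: if $\phi_\sigma = \phi_\tau$, then $(i,i)\phi_\sigma = (i\sigma, i\sigma)$ equals $(i\tau, i\tau)$ for every $i \in [n]$, forcing $\sigma = \tau$. For the homomorphism property, using the paper's convention that arguments are written on the left, one computes
\[
(i,j)(\phi_\sigma \circ \phi_\tau) = ((i,j)\phi_\sigma)\phi_\tau = (i\sigma, j\sigma)\phi_\tau = (i\sigma\tau, j\sigma\tau) = (i,j)\phi_{\sigma\tau},
\]
so $\Phi(\sigma\tau) = \Phi(\sigma)\circ \Phi(\tau)$.

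There is no real obstacle here: the substantive work has already been done upstream in establishing that every automorphism of $B_n$ has the form $(i,j)\mapsto (i\sigma, j\sigma)$ for some $\sigma \in S_n$. The only point that requires a moment's care is the right-action convention for composition, which must be respected when stating and checking the homomorphism equation.
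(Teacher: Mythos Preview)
Your proof is correct and follows the same approach as the paper: the paper's argument, given in the paragraph immediately preceding the proposition, sets up the assignment $\sigma \mapsto \phi_\sigma$ and asserts it is an isomorphism, while you simply spell out the routine checks (well-definedness, bijectivity, homomorphism) that the paper leaves to the reader.
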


\begin{theorem}\label{t1}
$End(B_n) = Aut(B_n) \cup \mathcal{C}_{I(B_n)}$. Hence,  $|End(B_n)| = n!+n+1.$
\end{theorem}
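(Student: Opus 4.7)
The plan is to partition the endomorphisms of $B_n$ according to the image of the zero element $\vt$ and show that the only two possibilities are ``$\vt f = \vt$'' (handled by the known classification of $End_\vt(B_n)$) and ``$\vt f \neq \vt$'' (forces $f$ to be a constant map onto a non-zero idempotent).

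First I would verify the easy inclusion $Aut(B_n) \cup \mathcal{C}_{I(B_n)} \subseteq End(B_n)$. Proposition \ref{bn-iso-sn} already identifies $Aut(B_n)$ with endomorphisms. For $\xi_e$ with $e \in I(B_n)$, note that $(\alpha + \beta)\xi_e = e = e + e = \alpha\xi_e + \beta\xi_e$, so each such constant map is indeed an endomorphism.

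For the reverse inclusion, let $f \in End(B_n)$. Since $\vt + \vt = \vt$, we have $\vt f = \vt f + \vt f$, so $\vt f \in I(B_n)$. I would then split into two cases. If $\vt f = \vt$, then $f \in End_\vt(B_n)$, and the cited isomorphism $End_\vt(B_n) \cong S_n^0$ (from \cite[Proposition 2.2]{a.gilbert10b}) says $f$ either corresponds to a permutation in $S_n$, hence lies in $Aut(B_n)$, or corresponds to the adjoined zero, in which case $f = \xi_\vt \in \mathcal{C}_{I(B_n)}$. If instead $\vt f = (k,k)$ for some $k \in [n]$, then for every $\alpha \in B_n$ the identity $\alpha + \vt = \vt$ gives
\[
\alpha f + (k,k) = (\alpha + \vt)f = \vt f = (k,k).
\]
Examining the definition of $+$ in $B_n$: if $\alpha f = \vt$ then the left-hand side is $\vt \neq (k,k)$, a contradiction; and if $\alpha f = (i,j)$ then $(i,j)+(k,k) = (k,k)$ forces $j = k$ and then $i = k$. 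Hence $\alpha f = (k,k)$ for every $\alpha$, i.e.\ $f = \xi_{(k,k)} \in \mathcal{C}_{I(B_n)}$. This step—extracting that $f$ must be constant from a single equation involving $\vt$—is the only non-bookkeeping content of the proof, but it is short once one exploits the very restrictive addition in $B_n$.

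Finally, I would count. By Proposition \ref{bn-iso-sn}, $|Aut(B_n)| = n!$, and since $I(B_n) = \{(k,k) : k \in [n]\} \cup \{\vt\}$ has $n+1$ elements, $|\mathcal{C}_{I(B_n)}| = n+1$. The two sets are disjoint because automorphisms are bijective while a constant map on $B_n$ (which has more than one element for $n \geq 1$) is not. Therefore $|End(B_n)| = n! + n + 1$, completing the theorem.
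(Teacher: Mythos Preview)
Your proof is correct and follows essentially the same route as the paper: split on whether $\vt f = \vt$, invoke the known description of $End_\vt(B_n)$ in the first case, and use the absorbing property $\alpha + \vt = \vt$ to force $f$ to be constant in the second. If anything, your version is a bit more careful than the paper's, since you explicitly account for the zero endomorphism $\xi_\vt$ in the first case and justify the disjointness of $Aut(B_n)$ and $\mathcal{C}_{I(B_n)}$ in the count.
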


\begin{proof}
Clearly,  $Aut(B_n) \cup \mathcal{C}_{I(B_n)} \subseteq End(B_n)$. Let $f \in End(B_n)$. If $\vt f = \vt$, then \break by \cite[Proposition 2.2]{a.gilbert10b}, $f \in Aut(B_n)$ and $f = \phi_{\sigma}$ for some $\sigma \in S_n$. If  $\vt f \ne \vt$, then $\vt f = (k, k)$ for some $k \in [n]$. Now, for any $(i, j)\in B_n$, $$(k, k) = \vt f = ((i, j) + \vt)f = (i, j)f + (k, k)$$ so that $(i, j)f = (k, k)$. Thus, if $\vt f \ne \vt$, then $f \in \mathcal{C}_{I(B_n)}$. Hence, \[End(B_n) = Aut(B_n) \cup \mathcal{C}_{I(B_n)}.\]

Since $|\mathcal{C}_{I(B_n)}| = n + 1$ and, by Proposition \ref{bn-iso-sn}, $|Aut(B_n)| = |S_n| = n!$, we have $|End(B_n)| = n!+n+1$.
\end{proof}

We now characterize and count the elements of ${\rm Aff}(B_n)$ in the following theorem.

\begin{theorem}\label{t3}
${\rm Aff}(B_n) =  {\rm Aff}(B_n)_n \cup \mathcal{C}_{B_n}$. Moreover, $|{\rm Aff}(B_n)| = (n!+1)n^2+1.$
\end{theorem}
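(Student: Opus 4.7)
The plan is to combine Theorem \ref{t1}, which gives $End(B_n) = Aut(B_n) \cup \mathcal{C}_{I(B_n)}$, with the definition of an affine map as a sum $g + \xi_\alpha$ where $g \in End(B_n)$ and $\alpha \in B_n$. This produces a natural four-way case analysis based on whether $g$ is an automorphism $\phi_\sigma$ or a constant idempotent map $\xi_{(m,m)}$, and whether $\alpha = \vt$ or $\alpha = (k,l) \in [n]\times[n]$.

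First I would work through the four cases. Any case in which $\alpha = \vt$ or $g = \xi_{(m,m)}$ forces $f$ to be constant: adding $\vt$ annihilates every output, and when $g$ is itself constant the sum $g + \xi_\alpha$ evaluates identically on every input, yielding $\xi_{(m,l)}$ if $m=k$ and $\xi_\vt$ otherwise. The essential case is $f = \phi_\sigma + \xi_{(k,l)}$, where a short computation shows $(i,j)f = (i\sigma, l)$ precisely when $j = k\sigma^{-1}$ (and $\vt$ otherwise), while $\vt f = \vt$. The support is therefore the single ``column'' $\{(i, k\sigma^{-1}) : i \in [n]\}$ of size exactly $n$, placing $f$ in ${\rm Aff}(B_n)_n$ and establishing ${\rm Aff}(B_n) \subseteq {\rm Aff}(B_n)_n \cup \mathcal{C}_{B_n}$. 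The reverse inclusion is immediate: ${\rm Aff}(B_n)_n \subseteq {\rm Aff}(B_n)$ by definition, while the identities $\xi_\vt = {\rm id} + \xi_\vt$ and $\xi_{(k,l)} = \xi_{(k,k)} + \xi_{(k,l)}$ exhibit every constant map as affine.

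For the count, I would show that the assignment $(\sigma, (k,l)) \mapsto \phi_\sigma + \xi_{(k,l)}$ is a bijection onto ${\rm Aff}(B_n)_n$: the support pins down $k\sigma^{-1}$, and then matching the values on this support forces $\sigma$ and $l$, hence also $k$. This contributes $n! \cdot n^2$ elements. The constant maps contribute $|\mathcal{C}_{B_n}| = |B_n| = n^2 + 1$ more, and the two pieces of the union are disjoint because constant maps have support of size $0$ or $n^2 + 1$, never $n$. Summing gives $(n!+1)n^2 + 1$. The only step I expect to require some care is the distinctness argument for the parametrization of ${\rm Aff}(B_n)_n$, but the explicit form of the support computed in the main case drives it in one line.
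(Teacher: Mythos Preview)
Your proposal is correct and follows essentially the same approach as the paper: both use Theorem~\ref{t1} to split $End(B_n)$ into automorphisms and idempotent constants, compute the support of $\phi_\sigma + \xi_{(k,l)}$ explicitly to see it has size $n$, and establish the count by showing $(\sigma,(k,l)) \mapsto \phi_\sigma + \xi_{(k,l)}$ parametrizes ${\rm Aff}(B_n)_n$ bijectively. The only cosmetic difference is that your four-way case split is slightly more explicit than the paper's nested case analysis, and you recover the parameters from the map while the paper argues by contrapositive that distinct parameters yield distinct maps.
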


\begin{proof}
Since every constant map over $B_n$ is an affine map, we have \break ${\rm Aff}(B_n)_n \cup \mathcal{C}_{B_n} \subseteq {\rm Aff}(B_n)$. Let $f \in {\rm Aff}(B_n)$. If $f = \xi_{\vt}$, then clearly $f \in \mathcal{C}_{B_n}$. Otherwise, write $f = g + \xi_{(p,q)}$ for some $g \in End(B_n) \setminus \{\xi_{\vt}\}$. By Theorem \ref{t1}, $g$ can be either $\xi_{(k, k)}$ for some $k \in [n]$ or $\phi_{\sigma}$ for some $\sigma \in S_n$.

In case $g = \xi_{(k, k)}$ for some $k \in [n]$, since $f \ne \xi_{\vt}$, we have $k = p$ so that $f = \xi_{(p, q)} \in \mathcal{C}_{B_n}$. Further, as there are $n$ possibilities each for $p$ and $q$, we have $n^2$ affine maps (of full support) in this case.

We may now suppose $g = \phi_{\sigma}$ for some $\sigma \in S_n$. Clearly, we have $\vt f = \vt$, because $\vt \phi_{\sigma} = \vt$. Now for $(i, j) \in B_n \setminus \{\vt\}$
\[(i,j)f = (i \sigma, j \sigma) + (p, q) =  \left\{\begin{array}{cl}
                (i\sigma, q), & \text {if $j = p \sigma^{-1}$};  \\
                \vt,     & \text {otherwise.}
                  \end{array}\right.  \]
Hence, ${\rm supp}(f) = \{(i, p \sigma^{-1}) : i \in [n]\}$ so that $f \in {\rm Aff}(B_n)_n$. Consequently,
\[{\rm Aff}(B_n) =  {\rm Aff}(B_n)_n \cup \mathcal{C}_{B_n}.\]

Since the above union is disjoint and $|\mathcal{C}_{B_n}| = n^2 + 1$, it remains to prove that $|{\rm Aff}(B_n)_n| = (n!)n^2$. As shown above, every affine map of $n$-support is precisely of the form $\phi_\sigma + \xi_{(p, q)}$, for some $\sigma \in S_n$ and $p, q \in [n]$.  Thus, $|{\rm Aff}(B_n)_{n}| \le (n!)n^2$. Now, let $f = \phi_\sigma + \xi_{(p, q)}$ and $g = \phi_\rho + \xi_{(s, t)}$, for some $\sigma, \rho \in S_n$ and $p, q, s, t \in [n]$. If $q \ne t$, then clearly ${\rm Im}(f) \ne {\rm Im}(g)$. If $\sigma \ne \rho$, then there exists $i_0 \in [n]$ such that $i_0 \sigma \neq i_0 \rho$. If ${\rm supp}(f) \cap {\rm supp}(g) = \vn$, then $f \ne g$. Otherwise, for $(i_0, k) \in {\rm supp}(f) \cap {\rm supp}(g)$, \[(i_0, k)f = (i_0\sigma, q) \ne (i_0 \rho, t) = (i_0, k)g.\] Assume $\sigma = \rho$ but $p \ne s$, then clearly ${\rm supp}(f) \ne {\rm supp}(g)$. Thus, distinct choices of $\sigma$ and $(p, q)$ determine distinct affine maps of $n$-support. Hence the result.
\end{proof}

Following remarks are immediate from the proof of Theorem \ref{t3}.

\begin{remark}\label{aff-nzer-const}
If $f \in {\rm Aff}(B_n)$ such that $\vt \in {\rm supp}(f)$ then $f$ is a nonzero constant map.
\end{remark}

\begin{remark}\label{r.re-su-n}
Given $f \in {\rm Aff}(B_n)_n$, there exist $k, q \in [n]$ and $\sigma \in S_n$ such that ${\rm supp}(f) = \{(i, k) \mid i \in [n]\}$, ${\rm Im}(f) = \{(i\sigma, q) \mid i \in [n]\} \cup\{\vt\}$ and $(i, k)f = (i\sigma, q)$, for all $i \in [n]$.
\end{remark}

\begin{definition}\label{d.re-su-n}
For $f \in {\rm Aff}(B_n)_n$, a \emph{representation of $f$} is defined by a triplet $(k, q; \sigma)$, where the parameters $k, q$ and $\sigma$ are as per Remark \ref{r.re-su-n}.
\end{definition}

\subsection{Classification of elements in $A^+(B_n)$}\label{si-a+b}

We conclude the section in this subsection by obtaining the cardinality of  $A^+(B_n)$ along with a classification of its elements  (cf. Theorem \ref{t2}).

\begin{proposition}\label{p.vt-su}
If $f \in A^+(B_n)$ and $\vartheta \in {\rm supp}(f)$, then $f$ is a nonzero constant map.
\end{proposition}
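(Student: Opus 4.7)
The plan is to reduce the statement to the analogous fact for $\mathrm{Aff}(B_n)$, which is already recorded in Remark \ref{aff-nzer-const}, and then propagate it across sums via Lemma \ref{r2}.

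Recall from the paragraph following the definition of $A^+(\G)$ that the additive semigroup $(A^+(B_n),+)$ is generated by $\mathrm{Aff}(B_n)$. So any $f\in A^+(B_n)$ can be written as a finite sum
\[ f = f_1 + f_2 + \cdots + f_m, \qquad f_i \in \mathrm{Aff}(B_n). \]
I would prove the claim by induction on the number of summands $m$.

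For the base case $m=1$, the map $f$ is itself affine and Remark \ref{aff-nzer-const} gives the conclusion directly. For the inductive step, assume the result for sums of length less than $m$ and write $f = f_1 + g$ where $g = f_2 + \cdots + f_m$. Suppose $\vartheta \in \mathrm{supp}(f)$. By Lemma \ref{r2}, $\mathrm{supp}(f) \subseteq \mathrm{supp}(f_1) \cap \mathrm{supp}(g)$, so both $f_1$ and $g$ contain $\vartheta$ in their supports. Remark \ref{aff-nzer-const} forces $f_1$ to be a nonzero constant, and the inductive hypothesis forces $g$ to be a nonzero constant. Write $f_1 = \xi_\alpha$ and $g = \xi_\beta$ with $\alpha,\beta \in B_n\setminus\{\vartheta\}$. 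Then $f = \xi_\alpha + \xi_\beta = \xi_{\alpha+\beta}$, which is a constant map. Since $\vartheta f = \alpha+\beta$ and $\vartheta\in\mathrm{supp}(f)$ means $\alpha+\beta\neq\vartheta$, this constant is nonzero, as required.

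The only slightly delicate point is making sure the assumption $\vartheta\in\mathrm{supp}(f)$ really transfers to every summand; this is exactly what the first part of Lemma \ref{r2} supplies. Aside from that, the argument is essentially bookkeeping, so I do not expect any real obstacle — the heart of the proposition is the affine-case statement already established in Remark \ref{aff-nzer-const}, and the Brandt multiplication rule makes the sum of two nonzero constants either zero or another nonzero constant, with the former excluded by hypothesis.
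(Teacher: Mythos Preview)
Your proof is correct and follows essentially the same idea as the paper's: decompose $f$ as a sum of affine maps, observe that $\vartheta$ must lie in the support of every summand, and then invoke Remark~\ref{aff-nzer-const} to conclude each summand is a nonzero constant. The only cosmetic difference is that you structure this as an induction using Lemma~\ref{r2}, whereas the paper argues in one step directly from the fact that $\vartheta$ is absorbing in $B_n$ (so $\vartheta f \ne \vartheta$ forces $\vartheta f_j \ne \vartheta$ for every $j$).
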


\begin{proof}
For $f \in A^+(B_n)$, write $f = f_1 + \cdots + f_m$ where each $f_j \in {\rm Aff}(B_n)$. If $\vt f \ne \vt$, then each $f_j$ must be a nonzero constant map (cf. Remark \ref{aff-nzer-const}) and hence $f$ is a nonzero constant map.
\end{proof}

It is clear that any nonzero constant map in $M(B_n)$ is of full support. The following corollary of Proposition \ref{p.vt-su} ascertains that the converse holds in case of the elements in $A^+(B_n)$.

\begin{corollary}\label{c.fu-cst}
If $f \in A^+(B_n)$ is of full support, then $f$ is a nonzero constant map. Hence, $|A^+(B_n)_{n^2+1}| = n^2.$
\end{corollary}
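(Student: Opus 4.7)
The plan is to apply Proposition \ref{p.vt-su} directly after making the elementary observation that full support forces $\vartheta$ to be in the support. Full support means $|{\rm supp}(f)| = |B_n| = n^2 + 1$, so ${\rm supp}(f) = B_n$; in particular, $\vartheta \in {\rm supp}(f)$. Proposition \ref{p.vt-su} then immediately gives that $f$ is a nonzero constant map, which is the first assertion.

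For the cardinality count, I would argue both inclusions. Every nonzero constant map $\xi_\alpha$ with $\alpha \in B_n \setminus \{\vartheta\}$ sends every element of $B_n$ to $\alpha \neq \vartheta$, and hence is of full support, so it lies in $A^+(B_n)_{n^2 + 1}$. Conversely, by the first part any $f \in A^+(B_n)_{n^2+1}$ is a nonzero constant map. Since the nonzero constant maps on $B_n$ are exactly $\{\xi_\alpha \mid \alpha \in B_n \setminus \{\vartheta\}\}$, and $|B_n \setminus \{\vartheta\}| = n^2$, we conclude $|A^+(B_n)_{n^2+1}| = n^2$.

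There is essentially no obstacle here; the corollary is a direct packaging of the preceding proposition combined with the trivial observation that nonzero constants exhaust the full-support elements. The only thing to be careful about is noting explicitly that $|B_n| = n^2 + 1$ so that the subscript $n^2+1$ is indeed the full-support case, and that the correspondence $\alpha \mapsto \xi_\alpha$ between $B_n \setminus \{\vartheta\}$ and the nonzero constant maps is a bijection.
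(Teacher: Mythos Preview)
Your proof is correct and matches the paper's intended argument exactly: the corollary is stated in the paper without a separate proof, precisely because it follows immediately from Proposition~\ref{p.vt-su} together with the observation (stated just before the corollary) that nonzero constant maps are of full support. The only implicit fact you use without citation is that every constant map lies in $A^+(B_n)$, which the paper records in the proof of Theorem~\ref{t3}.
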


\begin{proposition}\label{sum-su-n}
If $f, g \in {\rm Aff}(B_n)_n$ and $h$ is a nonzero constant map,  then we have
\begin{enumerate}
\item $|{\rm supp}(g + f)| = 0$ or $1$,
\item $|{\rm supp}(h + f)| = 1$,
\item $|{\rm supp}(f + h)| = 0$ or $n$.
\end{enumerate}
\end{proposition}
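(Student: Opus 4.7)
The plan is to handle all three parts by the same mechanism. I parametrize each affine $n$-support map via the representation $(k,q;\sigma)$ from Definition \ref{d.re-su-n}, write the nonzero constant as $h = \xi_{(p,q_h)}$ with $(p,q_h) \in B_n \setminus \{\vt\}$, and then track when the pointwise sum fails the matching rule of $B_n$ (namely, $(a,b) + (c,d) = \vt$ unless $b = c$). Lemma \ref{r2} will further confine the support of each sum to the intersection of the summand supports, which keeps the bookkeeping finite.

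For (1), let $f$ and $g$ have representations $(k,q_f;\sigma)$ and $(k',q_g;\sigma')$. The supports ${\rm supp}(f) = \{(i,k) : i \in [n]\}$ and ${\rm supp}(g) = \{(j,k') : j \in [n]\}$ intersect only if $k = k'$, so the case $k \neq k'$ forces $|{\rm supp}(g+f)| = 0$ by Lemma \ref{r2}. When $k = k'$, evaluating at $(i,k)$ gives $(i\sigma',q_g) + (i\sigma,q_f)$, which is nonzero exactly when $i\sigma = q_g$; this pins down the unique $i = q_g \sigma^{-1}$, so the support has size $1$.

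For (2), Lemma \ref{r2} again restricts ${\rm supp}(h+f)$ inside ${\rm supp}(f)$, since $xf = \vt$ forces $xh + xf = (p,q_h) + \vt = \vt$. On $(i,k) \in {\rm supp}(f)$ the sum $(p,q_h) + (i\sigma,q_f)$ is nonzero iff $i\sigma = q_h$, a single value of $i$. For (3), the analogous calculation on $(i,k) \in {\rm supp}(f)$ yields $(i\sigma,q_f) + (p,q_h)$, nonzero iff $q_f = p$; outside ${\rm supp}(f)$ the sum is $\vt + (p,q_h) = \vt$. The crucial feature is that the matching condition $q_f = p$ is \emph{independent of $i$}, so it either holds for all $n$ elements of ${\rm supp}(f)$ (giving support size $n$) or for none (giving support size $0$).

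No step constitutes a real obstacle; the argument is a direct symbolic unpacking of Definition \ref{d.re-su-n} and the addition rule of $B_n$. The only conceptual point worth flagging is the asymmetry between (2) and (3): in $h+f$ the matching coordinate sits on the right of $+$ and involves $i\sigma$, so exactly one $i$ survives, whereas in $f+h$ the matching coordinate lies on the left and equals the fixed $q_f$, so the condition is uniform in $i$. This contrast is precisely what distinguishes the cardinality $1$ in (2) from the dichotomy $0$ or $n$ in (3).
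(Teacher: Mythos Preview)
Your proof is correct and follows essentially the same approach as the paper: you parametrize $f$, $g$, and $h$ via the representation of Definition \ref{d.re-su-n}, restrict attention to the common support via Lemma \ref{r2}, and then read off the matching condition in $B_n$ to count surviving elements. The paper's argument differs only in notation and in verifying the ``outside the support'' cases directly rather than citing Lemma \ref{r2}.
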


\begin{proof}
Let $(k, q; \sigma)$ and $(k', q'; \sigma')$ be the representations of $f$ and $g$, respectively and $h = \xi_{(r, s)}$, for $r, s \in [n]$.
\begin{enumerate}
\item If $k \ne k'$, then ${\rm supp}(f) \cap {\rm supp}(g) = \vn$ and clearly, $|{\rm supp}(g + f)| = 0$. Otherwise,
\[{\rm supp}(f) = {\rm supp}(g) = \{(i, k) \mid 1 \le i \le n\}.\]
Since $\sigma$ is a permutation on $[n]$, let $j$ be the unique element in $[n]$ such that $j\sigma = q'$. Now, \[(j, k)(g + f) = (j\sigma', q') + (j\sigma, q) = (j\sigma', q)\] and, for $i \in [n]$ with $i \ne j$, $(i, k)(g + f) = \vt$. Thus, ${\rm supp}(g + f) = \{(j, k)\}$.

\item Since $\sigma$ is a permutation on $[n]$, there is a unique $t \in [n]$ such that $t\sigma = s$. Now,
\[(t, k)(h + f) = (t, k)\xi_{(r, s)} + (t, k)f = (r, s) + (t\sigma, q) = (r, q)\] and for all $\alpha \in B_n \setminus \{(t, k)\}$, $\alpha(h + f) = \vt$. Thus, $|{\rm supp}(h + f)| = 1$.

\item If $q \ne r$, then clearly, $\alpha(f + h) = \vt$, for all $\alpha \in B_n$, so that  $|{\rm supp}(f + h)| = 0$. Otherwise, for all $1 \le i \le n$, \[(i, k)(f + h) = (i\sigma, q) + (r, s) = (i\sigma, s)\] and, for all $\alpha \in B_n \setminus {\rm supp}(f)$, $\alpha(f + h) = \vt$. Hence, $|{\rm supp}(f + h)| = n$.
\end{enumerate}
\end{proof}

\begin{lemma}\label{l12d}
For $f \in M(B_n)$ and $k, l, p, q \in [n]$,  if $(k, l)f = (p, q)$ and $\alpha f = \vt$ for all $\alpha \in B_n \setminus \{(k, l)\}$, then $f \in A^+(B_n)_{1}$. Hence, $|A^+(B_n)_{1}| = n^4.$
\end{lemma}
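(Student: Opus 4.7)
The plan is to show that every singleton-support map of the form described actually lies in $A^+(B_n)$ by exhibiting it as a sum of two affine maps, and then to count. The key input is Proposition \ref{sum-su-n}(2): adding a nonzero constant map on the \emph{left} of an $n$-support affine map produces a $1$-support map, and the proof of that part gives explicit formulas for the support element and its image. So for each target datum $(k,l,p,q) \in [n]^4$, I will reverse-engineer a constant map $h = \xi_{(r,s)}$ and an affine map $g \in {\rm Aff}(B_n)_n$ with representation $(k',q';\sigma)$ such that $h + g$ realizes the prescribed behavior.

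Matching the formula $(t,k')(h+g) = (r,q')$, with $t$ the unique index with $t\sigma = s$, to the required behavior $(k,l)f = (p,q)$, I am led to set $\sigma = {\rm id}_{[n]}$, $s = k$, $r = p$, $k' = l$, $q' = q$. Concretely, I will take
\[
f \;=\; \xi_{(p,k)} \;+\; \bigl(\phi_{{\rm id}} + \xi_{(l,q)}\bigr).
\]
The second summand is affine by definition (endomorphism plus constant), and the first summand is a constant map, which is affine (take the endomorphism $\xi_{\vt}$ and add $\xi_{(p,k)}$). Hence $f$ is a sum of affine maps, so $f \in A^+(B_n)$. A direct pointwise check (or a direct appeal to Proposition \ref{sum-su-n}(2) with the parameters chosen above) then verifies that $(k,l)f = (p,q)$ and $\alpha f = \vt$ for all other $\alpha$, so $f$ has the required form and is the unique element of $M(B_n)$ with that form.

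For the count, such an $f$ is determined completely by the choice of the support element $(k,l) \in [n]\times[n]$ and the image value $(p,q) \in [n]\times[n]$; these choices are independent and yield distinct maps, giving $n^2 \cdot n^2 = n^4$. Conversely, every element of $A^+(B_n)_1$ is by definition of $1$-support, hence falls under the hypotheses of the lemma, so $|A^+(B_n)_1|$ is exactly $n^4$.

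I do not expect any real obstacle. The only thing to watch is the bookkeeping of matching the representation-parameters $(k',q';\sigma)$ of $g$ against the target data $(k,l,p,q)$ so that the single surviving support point lands in the right place with the right value; taking $\sigma = {\rm id}$ is what makes the matching cleanest.
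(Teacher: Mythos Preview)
Your proposal is correct and takes essentially the same approach as the paper: both exhibit the target map as a constant map plus an $n$-support affine map, using the mechanism behind Proposition~\ref{sum-su-n}(2). The only cosmetic difference is in the specific parameters chosen---the paper picks a permutation $\sigma$ with $k\sigma = q$ and uses the constant $\xi_{(p,q)}$, whereas you take $\sigma = {\rm id}$ and the constant $\xi_{(p,k)}$; your choice is arguably cleaner.
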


\begin{proof}
It is sufficient to prove that $f$ is a finite sum of affine maps. Consider a permutation $\sigma$ on $[n]$ such that $k\sigma = q$ and then consider $g \in {\rm Aff}(B_n)_n$ whose representation is $(l, q; \sigma)$. Note that $\xi_{(p, q)} + g \in A^+(B_n)$. Moreover,
\[(k, l)(\xi_{(p, q)} + g) =  (p, q) + (k\sigma, q) = (p, q)\] and, for $(i, l) \in {\rm supp}(g)$ with $i \ne k$, $(i, l)(\xi_{(p, q)} + g) = (p, q) + (i\sigma, q) = \vt$, as $i\sigma \ne q$. Further, it is clear that $\alpha(\xi_{(p, q)} + g) = \vt$, for all $\alpha \notin {\rm supp}(g)$. Hence, $\xi_{(p, q)} + g = f$. Consequently, $|A^+(B_n)_{1}|$ is the number of choices of $k, l, p, q \in [n]$, as desired.
\end{proof}

\begin{notation}
We use $\sis{(k, l)}{(p, q)}$ to denote the singleton support map $f$ whose ${\rm supp}(f) = \{(k, l)\}$ and ${\rm Im}(f) \setminus \{\vt\} = \{(p, q)\}$.
\end{notation}

\begin{lemma}\label{su-a+}
If $f \in  A^+(B_n)\setminus {\rm Aff}(B_n)$, then $|{\rm supp}(f)| = 1$.
\end{lemma}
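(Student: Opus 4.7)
The strategy is induction on the length of an additive decomposition. Because $A^+(B_n)$ is the additive subsemigroup of $(M(B_n),+)$ generated by ${\rm Aff}(B_n)$ (noted in the paragraph introducing $A^+(\G)$), we may write $f = f_1 + \cdots + f_m$ with each $f_i \in {\rm Aff}(B_n)$; since $f \notin {\rm Aff}(B_n)$, necessarily $m \geq 2$. I will prove $|{\rm supp}(f)| = 1$ by induction on $m$.

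For the base case $m = 2$, Theorem \ref{t3} splits the analysis into four subcases according to whether each $f_i$ lies in ${\rm Aff}(B_n)_n$ or in $\mathcal{C}_{B_n}$. If both $f_i$ are constants, their pointwise sum is again constant and hence affine, contradicting $f \notin {\rm Aff}(B_n)$. If $f_1 \in {\rm Aff}(B_n)_n$ and $f_2 \in \mathcal{C}_{B_n}$, then writing $f_1 = \phi_\sigma + \xi_{(p,q)}$ (as in the proof of Theorem \ref{t3}) and using associativity gives $f = \phi_\sigma + (\xi_{(p,q)} + f_2)$, an endomorphism plus a constant and therefore affine---again a contradiction. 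The remaining two subcases are precisely the situations handled by parts (2) and (1) of Proposition \ref{sum-su-n}, which force $|{\rm supp}(f)| \leq 1$; the value $0$ is ruled out because $f = \xi_\vartheta$ would place $f$ in $\mathcal{C}_{B_n} \subseteq {\rm Aff}(B_n)$.

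For the inductive step with $m \geq 3$, set $f' = f_1 + \cdots + f_{m-1} \in A^+(B_n)$, so that $f = f' + f_m$. If $f' \in {\rm Aff}(B_n)$, the two-term decomposition $f = f' + f_m$ is subject to the base case and yields $|{\rm supp}(f)| = 1$. If instead $f' \notin {\rm Aff}(B_n)$, the induction hypothesis gives $|{\rm supp}(f')| = 1$, Lemma \ref{r2} then yields $|{\rm supp}(f)| \leq 1$, and the value $0$ is again excluded because it would make $f$ affine.

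The most delicate step is the \emph{$n$-support followed by a constant} subcase of the base analysis: Proposition \ref{sum-su-n}(3) permits such a sum to retain $n$-support, so it does not on its own deliver the contradiction needed. The associativity regrouping outlined above circumvents Proposition \ref{sum-su-n} in that subcase and shows that the sum is in fact already in ${\rm Aff}(B_n)$, which is what cleanly reduces the base case to an application of parts (1) and (2) of Proposition \ref{sum-su-n}.
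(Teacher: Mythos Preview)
Your argument is correct, modulo one small point: in the base-case subcase where $f_1 \in \mathcal{C}_{B_n}$ and $f_2 \in {\rm Aff}(B_n)_n$, Proposition~\ref{sum-su-n}(2) is stated only for a \emph{nonzero} constant, so you should note that $f_1 = \xi_\vt$ would give $f = \xi_\vt \in {\rm Aff}(B_n)$ before invoking it. This is trivially patched.

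The paper takes a more direct route without induction. It writes each $f_i = g_i + h_i$ with $g_i \in End(B_n)$ and $h_i \in \mathcal{C}_{B_n}$ (necessarily $h_i \ne \xi_\vt$, since $f_i \ne \xi_\vt$), and argues that either every $f_i$ with $i \ge 2$ is constant---in which case the whole sum collapses to $g_1$ plus a constant and is affine---or else some $f_j$ with $j \ge 2$ lies in ${\rm Aff}(B_n)_n$. In the latter case a single regrouping exposes the subsum $h_{j-1} + f_j$ inside $f$, and Proposition~\ref{sum-su-n}(2) alone gives $|{\rm supp}(h_{j-1} + f_j)| = 1$; Lemma~\ref{r2} then yields $|{\rm supp}(f)| \le 1$. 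The trick of peeling off the constant tail $h_{j-1}$ from the preceding summand lets the paper treat ``$n$-support preceded by $n$-support'' uniformly with ``$n$-support preceded by a constant'', so only part~(2) of Proposition~\ref{sum-su-n} is needed, whereas your base case uses both (1) and (2). Your inductive approach is perfectly sound but a bit longer; the paper's one-shot argument is shorter and slightly slicker.
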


\begin{proof}
Suppose that $f = f_1 + \cdots + f_m \in A^+(B_n) \setminus {\rm Aff}(B_n)$ with $m \ge 2$ and $f_i = g_i + h_i$, for some $g_i \in End(B_n)$ and $h_i \in \mathcal{C}_{B_n}$. In view of Theorem \ref{t3}, for each $i$, either $f_i \in  \mathcal{C}_{B_n}$ or $f_i \in {\rm Aff}(B_n)_n$. Clearly, none of the $f_i$'s can be $\xi_\vt$. For all $i \ge 2$, if $f_i$'s are nonzero constant maps, then $f = g_1 + \xi$, where $\xi = h_1 + f_2 + \cdots + f_m \in \mathcal{C}_{B_n}$ so that $f \in {\rm Aff}(B_n)$; this contradicts the choice of $f$. On the other hand, $f_j \in {\rm Aff}(B_n)_n$ for some $j \ge 2$. Note that $|{\rm supp}(f)| \le |{\rm supp}(h_{j-1} + f_j)|$. Hence,  by Proposition \ref{sum-su-n}(2), $|{\rm supp}(f)| \le 1$; consequently, $|{\rm supp}(f)| = 1$.
\end{proof}

In view of Theorem \ref{t3}, we have the following corollaries of Lemma \ref{su-a+}.

\begin{corollary}\label{l12e}
For $n \geq 3$ and $1 < k <n, \; A^+(B_n)_{k} = \varnothing$.
\end{corollary}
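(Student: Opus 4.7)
The plan is to combine Lemma \ref{su-a+} with Theorem \ref{t3} to enumerate the possible support sizes of any element of $A^+(B_n)$, and then observe that none of them falls strictly between $1$ and $n$.

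First, I would split $A^+(B_n)$ as the disjoint union ${\rm Aff}(B_n) \sqcup (A^+(B_n) \setminus {\rm Aff}(B_n))$. For $f \in A^+(B_n) \setminus {\rm Aff}(B_n)$, Lemma \ref{su-a+} immediately forces $|{\rm supp}(f)| = 1$. For $f \in {\rm Aff}(B_n)$, Theorem \ref{t3} gives $f \in {\rm Aff}(B_n)_n \cup \mathcal{C}_{B_n}$: in the former case $|{\rm supp}(f)| = n$, while in the latter case $f = \xi_\alpha$ for some $\alpha \in B_n$, so $|{\rm supp}(f)|$ is either $0$ (when $\alpha = \vt$) or $n^2$ (when $\alpha \in B_n \setminus \{\vt\}$, since every nonzero constant map has full support, cf.\ Corollary \ref{c.fu-cst}).

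Putting the pieces together, for any $f \in A^+(B_n)$ we have $|{\rm supp}(f)| \in \{0, 1, n, n^2\}$. Under the hypothesis $n \geq 3$ and $1 < k < n$, the value $k$ is not in this set, and hence $A^+(B_n)_k = \varnothing$.

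No serious obstacle is anticipated; the statement is a direct enumeration of support sizes afforded by the preceding structural results. The only small point of care is to handle the constant map $\xi_\vt$ (whose support is empty) separately from the nonzero constants.
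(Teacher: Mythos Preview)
Your approach is exactly the one the paper intends: the corollary is stated immediately after Lemma~\ref{su-a+} with the remark ``In view of Theorem~\ref{t3}'', and your argument spells out precisely that combination. One small slip: a nonzero constant map has support equal to all of $B_n$, which has $n^2+1$ elements (not $n^2$), so the possible support sizes are $\{0,1,n,n^2+1\}$; this does not affect the conclusion.
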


\begin{corollary}\label{l12c}
For $n \ge 1$, $f \in {\rm Aff}(B_n)_n \Longleftrightarrow  f \in A^+(B_n)_n$. Hence, $|A^+(B_n)_{n}| = (n!)n^2.$
\end{corollary}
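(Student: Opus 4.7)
The plan is to split the equivalence into its two inclusions and then read off the cardinality from Theorem~\ref{t3}. The forward inclusion $\text{Aff}(B_n)_n \subseteq A^+(B_n)_n$ is immediate, because $\text{Aff}(B_n) \subseteq A^+(B_n)$ by the very definition of the affine near-semiring, and the support size of a map does not depend on the ambient set of maps in which it is viewed.

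For the reverse inclusion I would start with an arbitrary $f \in A^+(B_n)_n$ and split on whether or not $f$ is itself an affine map. If $f \in \text{Aff}(B_n)$, then Theorem~\ref{t3} forces $f \in \text{Aff}(B_n)_n \cup \mathcal{C}_{B_n}$; but every element of $\mathcal{C}_{B_n}$ has support either $0$ (in the case of $\xi_\vt$) or $n^2+1$ (in the case of any nonzero constant), and neither equals $n$ for $n \geq 1$, so $f \in \text{Aff}(B_n)_n$ as required. If instead $f \in A^+(B_n) \setminus \text{Aff}(B_n)$, then Lemma~\ref{su-a+} gives $|\text{supp}(f)| = 1$, which combined with $|\text{supp}(f)| = n$ forces $n = 1$.

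The only residual obstacle is to dispose of this degenerate $n = 1$ case, and I expect this to be the most delicate (though still short) step. Here I would invoke the already-established forward inclusion together with the exact counts $|\text{Aff}(B_1)_1| = (1!)(1^2) = 1$ from Theorem~\ref{t3} and $|A^+(B_1)_1| = 1^4 = 1$ from Lemma~\ref{l12d}: both sets are singletons with one contained in the other, so they coincide. Thus $f \in A^+(B_1)_1 = \text{Aff}(B_1)_1 \subseteq \text{Aff}(B_1)$, contradicting the case hypothesis $f \notin \text{Aff}(B_n)$ and closing the argument.

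Once $A^+(B_n)_n = \text{Aff}(B_n)_n$ is established, the cardinality assertion $|A^+(B_n)_n| = (n!)\,n^2$ is nothing more than the count of $\text{Aff}(B_n)_n$ already obtained in Theorem~\ref{t3}. I do not anticipate any genuinely hard step, since the substantive content has been isolated into Lemma~\ref{su-a+} (non-affine elements of $A^+(B_n)$ have singleton support) and Theorem~\ref{t3} (the structure of $\text{Aff}(B_n)$); the corollary is essentially a bookkeeping assembly of these two facts with a brief check to exclude constant maps and the $n = 1$ edge case.
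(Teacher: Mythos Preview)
Your proposal is correct and follows essentially the same route as the paper, which simply records the result as a corollary of Lemma~\ref{su-a+} in view of Theorem~\ref{t3}. Your treatment is in fact more careful than the paper's: the paper leaves the $n=1$ boundary case implicit (it is swept up separately in Remark~\ref{n = 1}), whereas you close it explicitly via the cardinality match $|\text{Aff}(B_1)_1| = |A^+(B_1)_1| = 1$ using Lemma~\ref{l12d}.
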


Now, combining the results from Corollary \ref{c.fu-cst} through Corollary \ref{l12c}, we have the following main result of the section.

\begin{theorem}\label{t2}
For $n \geq 2$, $|A^+(B_n)| = (n!+1)n^2+n^4+1.$
In fact, we have the following breakup of the elements of $A^+(B_n)$.
\begin{enumerate}
\item The  number of mappings of full support  is $n^2.$
\item The number of mappings of $n$-support  is $(n!)n^2.$
\item The number of mappings of singleton support is $n^4.$
\item The  number of mappings of  $0$-support is $1$.
\end{enumerate}
\end{theorem}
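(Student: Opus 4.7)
The plan is to assemble the results from Corollary \ref{c.fu-cst} through Corollary \ref{l12c} and check that they partition $A^+(B_n)$ into four disjoint classes indexed by support size. The preparatory lemmas have already done all of the real work, so the theorem is essentially a counting exercise once I verify that no other support sizes can occur.

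First I would establish that the support size of any $f \in A^+(B_n)$ must lie in $\{0,1,n,n^2+1\}$. The mapping of $0$-support is $\xi_\vt$, contributing $1$ element. For $|{\rm supp}(f)| \geq 2$, I would split into two cases. If $f \in {\rm Aff}(B_n)$, then by Theorem \ref{t3}, $f$ is either a nonzero constant map (full support, $n^2+1$) or lies in ${\rm Aff}(B_n)_n$ ($n$-support). If $f \in A^+(B_n) \setminus {\rm Aff}(B_n)$, then Lemma \ref{su-a+} forces $|{\rm supp}(f)| = 1$, which contradicts the assumption. Hence the support sizes that actually arise are exactly $0$, $1$, $n$, and $n^2+1$, and in particular Corollary \ref{l12e} is absorbed into this dichotomy.

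Next I would read off the sizes of the four classes from the earlier results: Corollary \ref{c.fu-cst} gives $|A^+(B_n)_{n^2+1}| = n^2$, Corollary \ref{l12c} gives $|A^+(B_n)_n| = (n!)n^2$, Lemma \ref{l12d} gives $|A^+(B_n)_1| = n^4$, and trivially $|A^+(B_n)_0| = 1$. Since these classes are disjoint (classified by support cardinality), adding them yields
\[
|A^+(B_n)| = n^2 + (n!)n^2 + n^4 + 1 = (n!+1)n^2 + n^4 + 1,
\]
which is the claimed formula, and the enumerated breakup in items (1)--(4) follows immediately.

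There is no real obstacle: the hypothesis $n \geq 2$ is needed only to ensure that the support sizes $1$, $n$, and $n^2+1$ are pairwise distinct (so that the four classes are genuinely disjoint rather than collapsing), and the classification lemmas of the previous subsection have already done all of the structural work. The theorem is essentially a bookkeeping statement packaging Corollaries \ref{c.fu-cst}, \ref{l12e}, \ref{l12c} and Lemma \ref{l12d} into a single count.
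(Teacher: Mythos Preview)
Your proposal is correct and matches the paper's own treatment essentially line for line: the paper simply states that Theorem \ref{t2} follows by combining Corollary \ref{c.fu-cst} through Corollary \ref{l12c}, and you have written out precisely that assembly, together with the observation that $n\ge 2$ keeps the four support sizes distinct.
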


\begin{remark}\label{n = 1}
For $n =1$, $End(B_n) = {\rm Aff}(B_n) = A^+(B_n) = \{(1, 1; id)\} \cup \mathcal{C}_{B_n}$, where $id$ is the identity permutation on $[n]$. Note that all the elements of the near-semiring $A^+(B_1)$ are idempotents in both the semigroup reducts.
\end{remark}

\section{Structure of $(A^+(B_n), +)$}

In this section, we study the additive semigroup structure of the affine near-semiring $A^+(B_n)$ via Green's relations. First we characterize Green's classes of $(A^+(B_n), +)$ and find their sizes.  Further, we investigate the regular and idempotent elements of $A^+(B_n)$ and certain relevant subsemigroups. We observe that the semigroup $(A^+(B_n), +)$ is eventually regular.  We refer \citep{hw76} for certain fundamental notions on semigroups.

The semigroup reduct $(A^+(B_n), +)$ of the affine near-semiring $(A^+(B_n), +, \circ)$ is denoted by $A^+(B_n)^{^+}$. Further, in a particular context, if there is no emphasis on the semigroup, we may simply write $A^+(B_n)$.

\subsection{Green's classes of $A^+(B_n)^{^+}$}

 In this subsection, we study all the Green's relations $\mathcal{R}$, $\mathcal{L}$, $\mathcal{D}$, $\mathcal{J}$ and $\mathcal{H}$ on the semigroup $A^+(B_n)^{^+}$. Being a finite semigroup, $A^+(B_n)^{^+}$ is periodic; hence, by \cite[Proposition 1.5]{hw76}, the Green's relations $\mathcal{J}$  and $\mathcal{D}$ coincide on $A^+(B_n)^{^+}$. For $f \in A^+(B_n)^{^+}$, $R_f$, $L_f$  and $D_f$ denote the Green's classes of the relations $\mathcal{R}$, $\mathcal{L}$ and $\mathcal{D}$, respectively, containing $f$.

The following result is useful in characterizing the Green's classes of $A^+(B_n)^{^+}$.

\begin{proposition}\label{p.sub-sg}
In $A^+(B_n)^{^+}$, we have the following.
\begin{enumerate}
\item The set of constant maps $\mathcal{C}_{B_n}$ is a subsemigroup which is isomorphic to $B_n$.
\item The set $A^+(B_n)_1 \cup \{\xi_\vt\}$ is an ideal which is isomorphic to the $0$-direct union of $n^2$ copies of $B_n$.
\end{enumerate}
Hence, both the subsemigroups are regular.
\end{proposition}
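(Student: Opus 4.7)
The plan is to handle the two parts separately, doing the easier constant-map piece first and then working out the multiplication table of singleton-support maps to recognise the zero-direct union in part (2).

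For (1), I would just evaluate pointwise: for any $\alpha, \beta \in B_n$ and $\gamma \in B_n$, $\gamma(\xi_\alpha + \xi_\beta) = \gamma\xi_\alpha + \gamma\xi_\beta = \alpha+\beta = \gamma\xi_{\alpha+\beta}$, so $\xi_\alpha + \xi_\beta = \xi_{\alpha+\beta}$. This shows $\mathcal{C}_{B_n}$ is closed under $+$, and the assignment $\alpha \mapsto \xi_\alpha$ is then visibly a bijective homomorphism $B_n \to \mathcal{C}_{B_n}$.

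For (2), first I would verify that $A^+(B_n)_1 \cup \{\xi_\vt\}$ is an ideal. Take $f \in A^+(B_n)_1 \cup \{\xi_\vt\}$ and any $g \in A^+(B_n)$. By Lemma \ref{r2}, $|{\rm supp}(f+g)| \le |{\rm supp}(f)| \le 1$ and similarly $|{\rm supp}(g+f)| \le 1$, so both sums are either $\xi_\vt$ or lie in $A^+(B_n)_1$; closure under $+$ in $A^+(B_n)$ then forces them back into $A^+(B_n)_1 \cup \{\xi_\vt\}$. Next I would compute the sum of two singleton-support maps. A direct pointwise evaluation gives: if $(k,l) \ne (k',l')$, then $\sis{(k,l)}{(p,q)} + \sis{(k',l')}{(p',q')} = \xi_\vt$ (each summand vanishes where the other does not); and if $(k,l) = (k',l')$, then $\sis{(k,l)}{(p,q)} + \sis{(k,l)}{(p',q')}$ equals $\sis{(k,l)}{(p,q')}$ when $q = p'$ and equals $\xi_\vt$ otherwise.

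With that table in hand, for each $(k,l) \in [n]\times [n]$ set $S_{(k,l)} = \{\sis{(k,l)}{(p,q)} : (p,q) \in [n]\times[n]\} \cup \{\xi_\vt\}$. The calculation above says $S_{(k,l)}$ is a subsemigroup, and the assignment $\sis{(k,l)}{(p,q)} \mapsto (p,q)$, $\xi_\vt \mapsto \vt$ is a bijection that respects $+$ via the very same rule defining $B_n$; so $S_{(k,l)} \cong B_n$. Since the $S_{(k,l)}$ meet only in $\xi_\vt$ and since elements from distinct $S_{(k,l)}$ and $S_{(k',l')}$ add to $\xi_\vt$, the union $A^+(B_n)_1 \cup \{\xi_\vt\} = \bigcup_{(k,l)} S_{(k,l)}$ is exactly the $0$-direct union of $n^2$ copies of $B_n$.

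Finally, regularity is a consequence of the two structural descriptions: $B_n$ is an inverse semigroup (every $(i,j)$ is inverse to $(j,i)$, and $\vt$ is its own inverse), so $\mathcal{C}_{B_n} \cong B_n$ is regular; and any $0$-direct union of regular semigroups is regular (a witnessing inverse exists inside whichever component contains the element), giving regularity of $A^+(B_n)_1 \cup \{\xi_\vt\}$. I do not foresee a real obstacle here; the only slightly finicky part is the case split in computing $\sis{(k,l)}{(p,q)} + \sis{(k',l')}{(p',q')}$, and even there the Brandt-semigroup addition rule makes every subcase immediate.
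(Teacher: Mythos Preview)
Your proposal is correct and follows essentially the same approach as the paper: both use Lemma~\ref{r2} for the ideal property, both identify the isomorphism $\sis{(k,l)}{(p,q)} \mapsto (p,q)$ into the $(k,l)$th copy of $B_n$, and both deduce regularity from that of $B_n$. The only difference is that you spell out the pointwise computations and the case split for sums of singleton-support maps, whereas the paper simply asserts the isomorphism is ``clearly'' a semigroup isomorphism; your extra verification is sound and does not change the strategy.
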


\begin{proof}\
\begin{enumerate}
\item Clearly, the assignment $\alpha \mapsto \xi_\alpha$, for all $\alpha \in B_n$, is an isomorphism from $B_n$ to $\mathcal{C}_{B_n}$.

\item Observe that, by Lemma \ref{r2}, $A^+(B_n)_1 \cup \{\xi_\vt\}$ is an ideal. Consider the semigroup $Z$ which is $0$-direct union of the collection $\{B_n^{(i, j)} \mid (i, j) \in [n] \times [n]\}$ of $n^2$ copies of $B_n$ indexed by $[n] \times [n]$. For the nonzero elements of $Z$, we write $(p, q)^{(i, j)}$ to denote the element $(p, q)$ which is in the $(i, j)$th copy of $B_n$.  Now, the assignment $\sis{(k, l)}{(p, q)} \mapsto (p, q)^{(k, l)}$ and $\xi_\vt \mapsto \vt$ is clearly a semigroup isomorphism from $A^+(B_n)_1 \cup \{\xi_\vt\}$ to $Z$.
\end{enumerate}
Regularity of these semigroups follows from the regularity of $B_n$.
\end{proof}

\begin{lemma}\label{l10}
Let $f$ and $g$ be two mappings in the semigroup $(M(B_n), +)$. If  $f\mathcal{R}g$ (or $f\mathcal{L}g$), then ${\rm supp}(f) = {\rm supp}(g)$.
\end{lemma}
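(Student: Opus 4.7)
The plan is to derive this immediately from Lemma \ref{r2}, which already tells us that adding anything on either side can only shrink the support.

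The first step is to unpack the definition of $\mathcal{R}$: $f\mathcal{R}g$ in $(M(B_n),+)$ means that either $f=g$ (using the adjoined identity in $S^1$), or there exist $u,v \in M(B_n)$ with $f+u = g$ and $g+v = f$. In the trivial case $f=g$ the conclusion is immediate, so we may assume both $u$ and $v$ genuinely exist in $M(B_n)$. Then Lemma \ref{r2} applied to $f+u=g$ gives
\[
{\rm supp}(g) = {\rm supp}(f+u) \subseteq {\rm supp}(f)\cap {\rm supp}(u) \subseteq {\rm supp}(f),
\]
and symmetrically $g+v=f$ yields ${\rm supp}(f)\subseteq {\rm supp}(g)$. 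Combining the two containments gives the desired equality.

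The second step is the $\mathcal{L}$-case, which is entirely analogous: write $u+f=g$ and $v+g=f$, and apply Lemma \ref{r2} on the other side to obtain ${\rm supp}(g)\subseteq {\rm supp}(f)$ and ${\rm supp}(f)\subseteq {\rm supp}(g)$ respectively. The fact that Lemma \ref{r2} is symmetric in the two arguments (it intersects the supports of both summands) is exactly what makes the $\mathcal{L}$ and $\mathcal{R}$ arguments interchangeable here.

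There is no real obstacle: the only thing to be careful about is remembering that Green's relations are defined via $S^1$, so one should explicitly note the trivial case in which no witness is needed. Since $B_n$ has a zero $\vt$ that absorbs under $+$, the support-shrinking phenomenon is built in, and the proof is a one-line application of Lemma \ref{r2} in each direction.
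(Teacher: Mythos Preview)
Your proof is correct and follows essentially the same approach as the paper: handle the trivial case $f=g$, then apply Lemma~\ref{r2} to the two witness equations to obtain mutual containment of supports, with the $\mathcal{L}$-case being entirely analogous. The only cosmetic difference is that you spell out the intersection form of Lemma~\ref{r2} and the $S^1$ subtlety explicitly, whereas the paper records only the needed inclusion.
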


\begin{proof}
If $f = g$, the result is straightforward. Otherwise, if $f\mathcal{R}g$, then there exist $h,h' \in M(B_n)$ such that $f+h = g$ and $g+h' = f$. By Lemma \ref{r2},\[ {\rm supp}(g) = {\rm supp}(f+h) \subseteq {\rm supp}(f)\] and \[{\rm supp}(f) = {\rm supp}(g+h') \subseteq {\rm supp}(g).\] Hence,  ${\rm supp}(f) = {\rm supp}(g)$. Similarly, if  $f\mathcal{L}g$, then ${\rm supp}(f) = {\rm supp}(g)$.
 \end{proof}

For $1 \le i \le 2$, let $\pi_i : [n] \times [n] \rightarrow [n]$ be the $i$th projection map. That is, $(p, q)\pi_1 = p$ and $(p, q)\pi_2 = q$, for all $(p, q) \in [n] \times [n]$.

\begin{definition}
For $f \in M(B_n)$, an \emph{image invariant} of $f$,  denoted by $ii(f)$, is defined as the number $q \in [n]$, if exists, such that \[{\rm Im}(f) \setminus \{\vt\} = \{(i,q) \mid i \in X\}\] for some $X \subseteq [n]$.
\end{definition}

\begin{remark}\label{im-inv}
From Theorem \ref{t2}, it can be observed that every nonzero element of  $A^+(B_n)$ has an image invariant.
\end{remark}

\begin{remark}\label{R-Bn}
For nonzero elements of $B_n$, the relation $\mathcal{R}$ (or $\mathcal{L}$) is the equality on the first coordinate (or on the second coordinate, respectively). Hence, other than the class $\{\vt\}$, the number of $\mathcal{R}$ or $\mathcal{L}$ classes in $B_n$ is $n$. Consequently, any two nonzero elements of $B_n$ are $\mathcal{D}$-related (cf. \cite[Lemma 2.4]{hw76}).
\end{remark}

In view of Lemma \ref{l10}, we characterize the Green's relations $\mathcal{R}$, $\mathcal{L}$ and $\mathcal{D}$ on $A^+(B_n)^{^+}$ classified by the supports of its elements.

\begin{theorem}\label{t.chr-singnfs}
For $f, g \in A^+(B_n)_1 \cup A^+(B_n)_{n^2 + 1}$, we have
\begin{enumerate}
\item $f \mathcal{R} g$ if and only if ${\rm supp}(f) = {\rm supp}(g)$ and $\alpha f\pi_1 = \alpha g\pi_1$, $\forall \alpha \in {\rm supp}(f)$,

\item  $f \mathcal{L} g$ if and only if  ${\rm supp}(f) = {\rm supp}(g)$  and  $\alpha f\pi_2 = \alpha g\pi_2$, $\forall \alpha \in {\rm supp}(f)$,

\item  $f \mathcal{D} g$ if and only if ${\rm supp}(f) = {\rm supp}(g).$
\end{enumerate}
\end{theorem}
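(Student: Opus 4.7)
The approach is to use Lemma \ref{l10} to secure the common-support condition and then split into the two cases allowed by the hypothesis $f, g \in A^+(B_n)_1 \cup A^+(B_n)_{n^2+1}$: either the common support is a singleton $\{(k,l)\}$, in which case $f = \sis{(k,l)}{(p,q)}$ and $g = \sis{(k,l)}{(p',q')}$, or it is all of $B_n$, in which case Corollary \ref{c.fu-cst} forces $f = \xi_{(p,q)}$ and $g = \xi_{(p',q')}$. In each case, the analysis of the defining equations $f + h = g$ and $g + h' = f$ reduces to a single Brandt-semigroup addition on the image coordinates.

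For part (1), I would first dispose of the trivial case $f = g$ and then, assuming $f \mathcal{R} g$ with witness $h \in A^+(B_n)$ satisfying $f + h = g$, evaluate at any $\alpha \in {\rm supp}(g) = {\rm supp}(f)$. Writing $\alpha f = (p, q)$ and $\alpha g = (p', q')$, the requirement $(p,q) + \alpha h = (p', q') \ne \vt$ forces $\alpha h = (q, q')$ and consequently $p = p'$, yielding $\alpha f \pi_1 = \alpha g \pi_1$. Conversely, given this agreement on first coordinates, explicit witnesses suffice: $h = \xi_{(q, q')}$ in the full-support case, and $h = \sis{(k,l)}{(q, q')}$ in the singleton case, the latter lying in $A^+(B_n)$ by Lemma \ref{l12d}; a symmetric $h'$ handles $g + h' = f$. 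Part (2) is entirely dual, reading second coordinates throughout.

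For part (3), the forward implication is immediate from parts (1) and (2) together with Lemma \ref{l10}: any mediator $h$ realising $f \mathcal{R} h \mathcal{L} g$ automatically forces ${\rm supp}(f) = {\rm supp}(h) = {\rm supp}(g)$. For the converse, given the common support, I would bridge $f$ and $g$ by an element that shares the first coordinate of $f$ and the second coordinate of $g$: $h = \xi_{(p, q')}$ in the full-support case and $h = \sis{(k,l)}{(p, q')}$ in the singleton case, with parts (1) and (2) then delivering $f \mathcal{R} h$ and $h \mathcal{L} g$. The only point demanding vigilance is that each mediating $h$ genuinely lies in $A^+(B_n)$ and not merely in $M(B_n)$; this is exactly what Lemma \ref{l12d} (for singleton-support witnesses) and the inclusion $\mathcal{C}_{B_n} \subseteq A^+(B_n)$ from Proposition \ref{p.sub-sg} (for constant witnesses) are there to provide. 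Beyond that, I do not foresee any deeper obstacle, only disciplined coordinate book-keeping.
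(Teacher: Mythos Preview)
Your proposal is correct. The argument is sound in every part: Lemma~\ref{l10} secures the support equality, the coordinate computation $(p,q)+\alpha h=(p',q')$ forces $p=p'$, and your explicit witnesses (constant maps in the full-support case, singleton-support maps via Lemma~\ref{l12d} in the other) are all genuinely in $A^+(B_n)$. The mediator in part~(3) is well chosen.

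Your route differs from the paper's in execution rather than in spirit. The paper's proof is essentially two lines: after invoking Lemma~\ref{l10}, it appeals to Proposition~\ref{p.sub-sg} (which identifies $\mathcal{C}_{B_n}$ with $B_n$ and $A^+(B_n)_1\cup\{\xi_\vt\}$ with a $0$-direct union of $n^2$ copies of $B_n$) together with Remark~\ref{R-Bn} (the Green's structure of $B_n$), and simply transfers the characterization through those isomorphisms. This is terse and conceptual but tacitly relies on the standard fact that Green's relations on a regular subsemigroup agree with the restrictions of the ambient ones. Your approach instead reconstructs the witnesses by hand inside $A^+(B_n)^+$, which makes the proof self-contained and sidesteps that appeal entirely. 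The paper's version is shorter; yours is more transparent and would serve a reader unfamiliar with the regular-subsemigroup lemma better.
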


\begin{proof}\
\begin{enumerate}
\item In view of Lemma \ref{l10}, if both $f, g$ are in $A^+(B_n)_{n^2 + 1}$ or in $A^+(B_n)_1$, the characterization follows from Proposition \ref{p.sub-sg} and Remark \ref{R-Bn}.
\item Similar to (1).
\item Clearly, $f \mathcal{D} g$ implies ${\rm supp}(f) = {\rm supp}(g)$. The converse follows from Proposition \ref{p.sub-sg} and Remark \ref{R-Bn}.
\end{enumerate}
\end{proof}

\begin{theorem}\label{t.chr-sun}
For $f,g \in A^+(B_n)_n$, we have
\begin{enumerate}
\item $f \mathcal{R} g$ if and only if ${\rm supp}(f) = {\rm supp}(g)$ and $\alpha f\pi_1 = \alpha g\pi_1$, $\forall \alpha \in {\rm supp}(f)$,
\item $L_f = \{f\}$,
\item $f \mathcal{D} g$ if and only if $f \mathcal{R} g$.
\end{enumerate}
\end{theorem}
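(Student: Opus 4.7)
My plan is to exploit the representation triplets $(k, q; \sigma)$ from Definition \ref{d.re-su-n} together with the sharp support bounds in Proposition \ref{sum-su-n}, using Lemma \ref{l10} to reduce every Green's relation to a question about which $h \in A^+(B_n)$ can satisfy $f + h = g$ or $h + f = g$ with both sides in $A^+(B_n)_n$. Since elements in $A^+(B_n)_n$ have support of size $n$ and any $\mathcal{L}$- or $\mathcal{R}$-partner of $f$ must share its support by Lemma \ref{l10}, Corollary \ref{l12c} keeps the entire discussion inside $A^+(B_n)_n$.

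For (1), write the representations as $f \sim (k, q; \sigma)$ and $g \sim (k', q'; \sigma')$. From $f\mathcal{R}g$ and Lemma \ref{l10} I get $k = k'$, so the stated condition $\alpha f\pi_1 = \alpha g\pi_1$ on ${\rm supp}(f)$ is precisely $\sigma = \sigma'$; this is what must be proved. From $f + h = g$ I will case-split on $h$ via Lemma \ref{su-a+} and Theorem \ref{t2}: Proposition \ref{sum-su-n}(1) rules out $h \in A^+(B_n)_n$ (support at most $1$), a direct calculation rules out $h \in A^+(B_n)_1$ and $h = \xi_\vartheta$ (support at most $1$), and Proposition \ref{sum-su-n}(3) then shows $h$ must be a nonzero constant $\xi_{(r,s)}$ with $r = q$, whereupon a direct computation shows $f + \xi_{(q,s)}$ has representation $(k, s; \sigma)$---in particular, the permutation $\sigma$ is preserved. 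The mirror argument applied to $g + h' = f$ yields $\sigma' = \sigma$. For the converse, given matching triplets $(k, q; \sigma)$ and $(k, q'; \sigma)$, the witnesses $h = \xi_{(q, q')}$ and $h' = \xi_{(q', q)}$ produce the $\mathcal{R}$-relation.

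For (2), suppose $g\mathcal{L}f$. Lemma \ref{l10} and Corollary \ref{l12c} place $g$ in $A^+(B_n)_n$. The case analysis on $h + f$ this time shows $|{\rm supp}(h + f)| \le 1$ for every $h \in A^+(B_n)$, using Proposition \ref{sum-su-n}(1)--(2) for the $n$-support and nonzero-constant cases, and a short calculation for the singleton-support and $\xi_\vartheta$ cases. For $n \ge 2$ this contradicts $h + f \in A^+(B_n)_n$ unless $g = f$; the $n = 1$ case is immediate from Remark \ref{n = 1}. Part (3) then drops out formally: given $f\mathcal{D}g$, pick $h$ with $f\mathcal{R}h\mathcal{L}g$; Lemma \ref{l10} and Corollary \ref{l12c} put $h \in A^+(B_n)_n$, so by (2) $h = g$, whence $f\mathcal{R}g$. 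The main obstacle is organizing the $h$-case analysis cleanly, especially the singleton-support subcase, which is not covered by Proposition \ref{sum-su-n} and must be handled by observing directly that at most one argument $\alpha$ can give a nonzero value in both $h$ and $f$.
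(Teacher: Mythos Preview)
Your argument is correct and follows the same route as the paper: Lemma~\ref{l10} forces equal supports, Proposition~\ref{sum-su-n} is used to constrain the additive witnesses (forcing $h$ to be a nonzero constant in (1), and bounding $|{\rm supp}(h+f)|\le 1$ in (2)), and (3) drops out of (2). You are in fact more careful than the paper in one place: for (2) the paper cites only Proposition~\ref{sum-su-n} for the bound $|{\rm supp}(h+f)|\le 1$, but that proposition does not treat $h\in A^+(B_n)_1$; your direct handling of that subcase is exactly Lemma~\ref{r2}.
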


\begin{proof}\
\begin{enumerate}
\item For $f, g \in A^+(B_n)_n$,  by Proposition \ref{sum-su-n}(3), $f \mathcal{R} g$ if and only if there exist $\xi, \xi' \in \mathcal{C}_{B_n}$ such that $f = g + \xi$ and $g = f + \xi'$. This implies that $\alpha f\pi_1 = \alpha g\pi_1$, for all $\alpha \in {\rm supp}(f)$ (= ${\rm supp}(g)$, by Lemma \ref{l10}). For the converse, let $ii(f) = l$ and $ii(g) = m$ (cf. Remark \ref{im-inv}). Choose the functions $h = \xi_{(l, m )}$ and $h' = \xi_{(m, l)}$. We show, simultaneously, that ${\rm supp}(f+h) = {\rm supp}(g)$ and $\alpha(f + h) = \alpha g$, for all $\alpha \in {\rm supp}(g)$. Note that ${\rm supp}(f + h) \subseteq {\rm supp}(f) = {\rm supp}(g)$. Let $\alpha \in {\rm supp}(g)$. Then, $\alpha f \neq \vt$ so that $\alpha f = (k, l)$ for some $k \in [n]$, as $ii(f) = l$. Thus, since $\alpha f\pi_1 = \alpha g\pi_1$ and $ii(g) = m$, we have $\alpha g = (k, m)$. Now,
\[\alpha (f + h) = \alpha f + \alpha h = (k,l) + (l,m) = (k,m) = \alpha g.\]
Hence, $\alpha \in {\rm supp}(f + h)$ and $f + h = g$.  Similarly, we can prove that $g + h' = f$ so that $f\mathcal{R}g$.

\item If $n = 1$, the result is straightforward. For $n \ge 2$, let $g \in L_f$ with $g \ne f$. Then there exists $h \in A^+(B_n)$ such that $h + f = g$. However, by  Proposition \ref{sum-su-n}, $|{\rm supp}(h + f)| \le 1$; a contradiction. Thus, $L_f = \{f\}$.

\item Follows from (2).
\end{enumerate}
\end{proof}

In view of  Theorem \ref{t2}, we have the following corollary of theorems \ref{t.chr-singnfs} and \ref{t.chr-sun}.
\begin{corollary}\label{a+bn-gcl-ct}
For $n \ge 2$, we have the following.
\begin{enumerate}
\item The number of $\mathcal{R}$-classes in $A^+(B_n)^{^+}$ is $(n!)n + n^3 + n + 1$.
\item The number of $\mathcal{L}$-classes in $A^+(B_n)^{^+}$ is $(n!)n^2 + n^3 + n + 1.$
\item The number of $\mathcal{D}$-classes in $A^+(B_n)^{^+}$ is  $(n!)n + n^2 + 2.$
\end{enumerate}
\end{corollary}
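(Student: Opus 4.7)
The plan is to stratify $A^+(B_n)$ by support size, using the four-way breakup from Theorem \ref{t2}, and count Green's classes within each stratum separately. What makes this legitimate is that for $n \ge 2$ the four support sizes $0$, $1$, $n$, $n^2+1$ are pairwise distinct, and by Lemma \ref{l10} any $\mathcal{R}$- or $\mathcal{L}$-related pair shares a support; consequently the $\mathcal{D}$-relation, which factors as $\mathcal{R}\circ\mathcal{L}$, also preserves the support size. Hence no Green's class can straddle two strata, and the totals are simply the sums of the within-stratum counts.

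For the zero element $\xi_\vt$ each relation contributes one class. For the full-support stratum $\mathcal{C}_{B_n}$, Proposition \ref{p.sub-sg}(1) transports the Green structure from $B_n$, so Remark \ref{R-Bn} supplies $n$ nonzero $\mathcal{R}$-classes, $n$ nonzero $\mathcal{L}$-classes, and a single nonzero $\mathcal{D}$-class. For the singleton-support stratum $A^+(B_n)_1$, Proposition \ref{p.sub-sg}(2) identifies $A^+(B_n)_1 \cup \{\xi_\vt\}$ with a $0$-direct union of $n^2$ copies of $B_n$; each nonzero copy then contributes $n$ $\mathcal{R}$-classes, $n$ $\mathcal{L}$-classes, and one $\mathcal{D}$-class, producing $n^3$, $n^3$, and $n^2$ classes in aggregate.

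The substantive stratum is $A^+(B_n)_n$. Here I would parametrize each element by its representation $(k, q;\sigma)$ from Definition \ref{d.re-su-n}, so that ${\rm supp}(f) = \{(i,k) \mid i \in [n]\}$ and the first-coordinate map $\alpha \mapsto \alpha f\pi_1$ on the support is $i \mapsto i\sigma$. Theorem \ref{t.chr-sun}(1) then identifies the $\mathcal{R}$-class of $f$ with the pair $(k,\sigma) \in [n] \times S_n$, giving $(n!)n$ $\mathcal{R}$-classes; Theorem \ref{t.chr-sun}(2) makes each $\mathcal{L}$-class a singleton, giving $|A^+(B_n)_n| = (n!)n^2$ classes; and Theorem \ref{t.chr-sun}(3) gives $\mathcal{D} = \mathcal{R}$ on this stratum, hence $(n!)n$ $\mathcal{D}$-classes. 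Adding the per-stratum counts yields the three totals in the statement.

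The only verification requiring care is that distinct pairs $(k,\sigma)$ really do index distinct $\mathcal{R}$-classes on $A^+(B_n)_n$, but this is immediate: $k$ is read off from the common support and $\sigma$ from the common values of $\alpha f\pi_1$. So I do not anticipate a genuine obstacle; the corollary reduces to arithmetic bookkeeping over the explicit characterizations in Theorems \ref{t.chr-singnfs} and \ref{t.chr-sun}, with the cross-stratum separation supplied by Lemma \ref{l10}.
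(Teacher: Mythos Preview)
Your proposal is correct and follows essentially the same approach as the paper: stratify by support size, count Green's classes within each stratum via Theorems~\ref{t.chr-singnfs} and~\ref{t.chr-sun}, and add. Your write-up is in fact slightly more careful than the paper's in two respects---you make explicit (via Lemma~\ref{l10} and the $\mathcal{D}=\mathcal{R}\circ\mathcal{L}$ factorization) why no class crosses strata, and you spell out why the $\mathcal{R}$-classes in $A^+(B_n)_n$ are parametrized by $(k,\sigma)\in[n]\times S_n$---but the underlying argument is identical.
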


\begin{proof}\
\begin{enumerate}
\item Other than the class $\{\xi_\vt\}$, by Theorem \ref{t.chr-singnfs}(1), there are $n$ and $n^3$ $\mathcal{R}$-classes containing the full support maps and singleton support maps, respectively. Further, by Theorem \ref{t.chr-sun}(1), $(n!)n$ $\mathcal{R}$-classes are present in $A^+(B_n)_n$ (each is of size $n$). Hence, we have the total number.

\item Similar to above (1), by Theorem \ref{t.chr-singnfs}(2), there are $n^3 + n + 1$  $\mathcal{L}$-classes containing singleton support and constant maps. And the remaining number $(n!)n^2$ is the number of $\mathcal{L}$-classes containing $n$-support maps (cf. Theorem \ref{t.chr-sun}(2)).

\item Other than the class $\{\xi_\vt\}$, by Theorem \ref{t.chr-singnfs}(3),  all the full support elements form a single $\mathcal{D}$-class and there are $n^2$ $\mathcal{D}$-classes (each is of size $n^2$) containing singleton support elements. Including $(n!)n$ $\mathcal{D}$-classes which are present in $A^+(B_n)_n$ (cf. Theorem \ref{t.chr-sun}(3) and above (1)), we have $(n!)n + n^2 + 2$ $\mathcal{D}$-classes in $A^+(B_n)^{^+}$.
\end{enumerate}
\end{proof}

\begin{remark} \label{r.ape-mbn}
Since $\alpha  + \alpha = \alpha + \alpha + \alpha$, for all $\alpha \in B_n$, we have $f + f = f + f + f$, for all $f$ in the semigroup $(M(B_n), + )$. Consequently, any subsemigroup of $(M(B_n), +)$ is aperiodic.
\end{remark}

Hence, by Remark \ref{r.ape-mbn}, we have the following proposition (cf. \cite[Proposition 4.2]{pin86}).

\begin{proposition}\label{a+bn-jnh}
The Green's relation $\mathcal{H}$  is trivial on the semigroup $A^+(B_n)^{^+}$.
\end{proposition}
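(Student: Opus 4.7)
The plan is to follow the hint printed immediately before the proposition. Remark \ref{r.ape-mbn} establishes that every element $f$ of the ambient semigroup $(M(B_n),+)$ satisfies the identity $f+f = f+f+f$, an identity which is automatically inherited by the subsemigroup $A^+(B_n)^{^+}$. Written multiplicatively this is $x^2 = x^3$, which is precisely the standard defining identity of a finite aperiodic semigroup (every element has index at most $2$ and period $1$). Combined with Theorem \ref{t2}, which guarantees that $A^+(B_n)^{^+}$ is finite, we see that the two hypotheses of the general theorem we wish to cite are in place.

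Next I would invoke \cite[Proposition 4.2]{pin86}, which states that in a finite aperiodic semigroup Green's relation $\mathcal{H}$ coincides with the equality relation. Applied to $A^+(B_n)^{^+}$, this immediately yields that $\mathcal{H}$ is trivial, which is the assertion of the proposition.

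If a fully self-contained argument were desired, one could proceed directly as follows. Assume $f\,\mathcal{H}\,g$ and choose witnesses $h_1,h_2,h_3,h_4\in A^+(B_n)^{^+}$ (or in the monoid-adjunction) satisfying $f+h_1=g$, $g+h_2=f$, $h_3+f=g$ and $h_4+g=f$. Substituting and using the aperiodicity identity $x+x=x+x+x$ collapses iterated sums, and after a finite number of steps one derives $f=g$. This mirrors the standard argument that aperiodic $\mathcal{H}$-classes are singletons: any such class containing an idempotent is a group, and aperiodicity forces the group to be trivial; in the finite setting every $\mathcal{H}$-class that is not a singleton is readily forced to contain an idempotent via the witnesses above.

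I do not anticipate any real obstacle here: the content of the proposition is entirely supplied by Remark \ref{r.ape-mbn} together with the cited textbook result, and the only thing to verify is the translation between the additive identity $f+f=f+f+f$ and the classical notion of aperiodicity used in \cite{pin86}. The slightly finer points---for example, that the inclusion $A^+(B_n)^{^+}\hookrightarrow (M(B_n),+)$ genuinely transports the identity, and that finiteness of $A^+(B_n)^{^+}$ is needed for Pin's proposition to apply---are immediate from Theorem \ref{t2} and Remark \ref{r.ape-mbn}.
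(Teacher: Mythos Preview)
Your proposal is correct and matches the paper's argument exactly: the paper derives the proposition directly from Remark~\ref{r.ape-mbn} together with \cite[Proposition~4.2]{pin86}, just as you do. (Your optional self-contained sketch contains a small slip---not every $\mathcal{H}$-class in a finite semigroup is forced to contain an idempotent---but the cited route is precisely what the paper intends and is fully sufficient.)
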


\begin{remark}
Proposition 3.3(e) in \citep{a.gilbert10b}, given for endomorphism near-semirings, also follows immediately from Remark \ref{r.ape-mbn}.
\end{remark}

\subsection{Regular elements and idempotents in $A^+(B_n)^{^+}$}

In this subsection, we characterize the regular and idempotent elements in $A^+(B_n)^{^+}$ and ascertain that $A^+(B_n)^{^+}$ is eventually regular. We observe that the set of regular elements in $A^+(B_n)^{^+}$ forms an inverse semigroup.

\begin{theorem}\label{l27} For $n \ge 2$,
\begin{enumerate}
\item $f \in  A^+(B_n)^{^+}$ is of $k$-support with $k \ne n$ if and only if $f$ is regular;
\item $I(A^+(B_n)^{^+}) = \displaystyle\left\{\xi_{\alpha} \left| \alpha \in I(B_n) \right.\right\} \cup \left\{\left.\sis{(i, j)}{(k, k)} \right| i, j, k \in [n]\right\}.$
\end{enumerate}
\end{theorem}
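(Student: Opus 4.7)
The plan is to leverage the classification in Theorem~\ref{t2} (every $f \in A^+(B_n)$ is of $0$-, $1$-, $n$-, or $(n^2{+}1)$-support) together with Proposition~\ref{p.sub-sg}, Lemma~\ref{r2}, and Proposition~\ref{sum-su-n}, and handle each support class in turn.

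For part (1), I would first dispatch the ``if'' direction directly from Proposition~\ref{p.sub-sg}. The full-support elements together with $\xi_{\vt}$ lie in the subsemigroup $\mathcal{C}_{B_n} \cong B_n$, which is regular; the singleton-support elements together with $\xi_{\vt}$ form the ideal $A^+(B_n)_1 \cup \{\xi_{\vt}\}$, which is a $0$-direct union of copies of $B_n$ and hence also regular. This covers every $f$ with $|{\rm supp}(f)| \ne n$. For the ``only if'' direction I argue by contradiction: suppose $f \in A^+(B_n)_n$ is regular, so $f + g + f = f$ for some $g \in A^+(B_n)$. Setting $h = g + f$, Lemma~\ref{r2} forces ${\rm supp}(f) = {\rm supp}(f + h) \subseteq {\rm supp}(h)$, so $|{\rm supp}(h)| \ge n \ge 2$. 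I then rule out each of the four possibilities for $g$ given by Theorem~\ref{t2}: if $g = \xi_{\vt}$ then $g + f = \xi_{\vt}$; if $g \in A^+(B_n)_n$ then Proposition~\ref{sum-su-n}(1) yields $|{\rm supp}(h)| \le 1$; if $g$ is a nonzero constant then Proposition~\ref{sum-su-n}(2) yields $|{\rm supp}(h)| = 1$; and if $g = \sis{(a,b)}{(c,d)}$ is of singleton support, a direct evaluation shows $(g+f)$ vanishes off $\{(a,b)\}$, so $|{\rm supp}(h)| \le 1$. In every case $|{\rm supp}(h)| \le 1 < n$, a contradiction.

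For part (2), I first observe that any idempotent $f$ is trivially regular (take $g = f$), so by part~(1) the support of $f$ is $0$, $1$, or $n^2 + 1$. If $f$ is a constant $\xi_{\alpha}$ (including $\xi_{\vt}$), then $f + f = \xi_{\alpha + \alpha}$, so $f$ is idempotent iff $\alpha \in I(B_n)$, which gives the first piece $\{\xi_{\alpha} \mid \alpha \in I(B_n)\}$. If $f = \sis{(i,j)}{(p,q)}$ is of singleton support, then $(i,j)(f+f) = (p,q) + (p,q)$, which equals $(p,q)$ exactly when $p = q$, while $\alpha(f+f) = \vt$ for $\alpha \ne (i,j)$; this pins the singleton-support idempotents down to the maps $\sis{(i,j)}{(k,k)}$ with $i, j, k \in [n]$.

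The main technical obstacle will be the singleton-support sub-case of the ``only if'' direction of~(1), since Proposition~\ref{sum-su-n} is stated only for affine addends; I will need a short direct computation of $(g + f)(\alpha)$ at $\alpha = (a,b)$ versus $\alpha \ne (a,b)$ to bound the support of $g + f$ when $g = \sis{(a,b)}{(c,d)}$. Everything else is a clean book-keeping of the four support classes.
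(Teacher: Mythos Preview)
Your proposal is correct and follows essentially the same approach as the paper, which is terser (it simply cites Proposition~\ref{sum-su-n} for $|{\rm supp}(f+g+f)|\le 1$ without spelling out the case split on $g$, and for part~(2) it rules out $n$-support idempotents directly via $|{\rm supp}(f+f)|=1$ rather than by invoking part~(1)). Note that the ``technical obstacle'' you flag for singleton-support $g$ dissolves via Lemma~\ref{r2}: ${\rm supp}(g+f)\subseteq{\rm supp}(g)$ already has size at most $1$, so no separate direct computation is needed.
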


\begin{proof}\
\begin{enumerate}
\item In view of Proposition \ref{p.sub-sg}, it is sufficient to show that $n$-support elements are not regular. For $f \in A^+(B_n)_n$, if there is a $g \in A^+(B_n)$ such that $f + g + f = f$, then, by Proposition \ref{sum-su-n}, $|{\rm supp}(f + g + f)| \le 1$; a contradiction. Hence, $f$ is not regular.

\item Since $I(B_n) = \{(k, k) \mid k \in [n]\}$, by Proposition \ref{p.sub-sg}, \[I(\mathcal{C}_{B_n}) = \{\xi_\alpha \mid \alpha \in I(B_n)\}\] and \[I(A^+(B_n)_1) = \displaystyle \left\{\left.\sis{(i, j)}{(k, k)} \right| i, j, k \in [n]\right\}.\] Further, for $f \in A^+(B_n)_n$, since $|{\rm supp}(f+f)| = 1$ (cf. Proposition \ref{sum-su-n}),  $f + f$ cannot be $f$. Hence, the idempotents of $A^+(B_n)^{^+}$ are merely in $\mathcal{C}_{B_n} \cup A^+(B_n)_1$.
\end{enumerate}
\end{proof}

\begin{corollary}
$A^+(B_n)^{^+}$ has $n^3 + n + 1$ idempotents and $n^4 + n^2 + 1$ regular elements.
\end{corollary}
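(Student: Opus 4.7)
The plan is to read the two counts directly off Theorem \ref{l27} together with the breakup of $A^+(B_n)$ supplied by Theorem \ref{t2}, since no new structural work is required.

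For the idempotent count, I would use Theorem \ref{l27}(2), which exhibits $I(A^+(B_n)^{^+})$ as the disjoint union of $\{\xi_\alpha \mid \alpha \in I(B_n)\}$ and $\{\sis{(i,j)}{(k,k)} \mid i,j,k \in [n]\}$. Since $I(B_n) = \{(k,k) : k \in [n]\} \cup \{\vt\}$ has $n+1$ elements, the first set contributes $n+1$. The second set is parametrized by three independent choices from $[n]$, contributing $n^3$. These two sets are disjoint because elements of the first are of full support while elements of the second are of singleton support. Adding gives $n^3 + n + 1$.

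For the regular element count, I would combine Theorem \ref{l27}(1) with the itemized breakup in Theorem \ref{t2}: the regular elements are exactly those whose support size is different from $n$, i.e. all elements of full support, all elements of singleton support, and the zero map $\xi_\vt$. By Theorem \ref{t2}, these three classes contain $n^2$, $n^4$, and $1$ elements respectively, and they are disjoint by support size, yielding a total of $n^4 + n^2 + 1$.

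I do not expect any real obstacle here. The only mild point to watch is making sure the union in Theorem \ref{l27}(2) is counted as disjoint (which it is, by support), and that Corollary \ref{l12e} together with Theorem \ref{t2} really does enumerate every element of support size $\ne n$; both are already established above, so the corollary is essentially arithmetic.
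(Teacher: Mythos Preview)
Your proposal is correct and matches the paper's approach exactly: the corollary is stated there without proof, as an immediate count read off Theorem~\ref{l27} together with the support-size breakup in Theorem~\ref{t2}. One tiny wording fix: in your disjointness argument, $\xi_\vt$ in the first set has $0$-support rather than full support, but the two sets are still clearly disjoint since no constant map has singleton support.
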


For $n \ge 2$, since $n$-support  elements in $A^+(B_n)^{^+}$ are not regular, the semigroup $A^+(B_n)^{^+}$ is not a regular semigroup. However, in the following proposition, we prove that $A^+(B_n)^{^+}$ is eventually regular, i.e. for every $f \in A^+(B_n)^{^+}$, we observe that there is a number $m$ such that $mf$ $(= f + \cdots + f$ for $m$ times) is regular \citep{ed83}.

\begin{proposition}\label{l14e}
The semigroup $A^+(B_n)^{^+}$ is eventually regular.
\end{proposition}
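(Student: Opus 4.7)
The plan is to reduce eventual regularity to Theorem \ref{l27}(1), which states that an element of $A^+(B_n)^{^+}$ is regular exactly when its support size is not equal to $n$. By Theorem \ref{t2}, the only possible support sizes are $0$, $1$, $n$, and $n^2+1$, so the only potentially non-regular elements are those in $A^+(B_n)_n$.

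First I would split into cases on $|{\rm supp}(f)|$. If $|{\rm supp}(f)| \in \{0, 1, n^2+1\}$, then $f$ is already regular by Theorem \ref{l27}(1), so $m = 1$ works trivially. The remaining case is $f \in A^+(B_n)_n$.

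For this case, the key observation is that Corollary \ref{l12c} identifies $A^+(B_n)_n$ with ${\rm Aff}(B_n)_n$, so Proposition \ref{sum-su-n}(1) applies to the pair $(f, f)$ and gives $|{\rm supp}(f + f)| \in \{0, 1\}$. Hence $2f := f + f$ has support size different from $n$, and so is regular by Theorem \ref{l27}(1). Therefore $m = 2$ suffices in this case.

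There is no real obstacle here; the proposition is essentially a direct corollary of Proposition \ref{sum-su-n}(1) combined with Theorem \ref{l27}(1), once one notices that the only non-regular elements live in $A^+(B_n)_n$ and that any sum of two such elements lands in the regular part.
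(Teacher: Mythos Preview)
Your proof is correct and follows essentially the same approach as the paper: reduce to the $n$-support case via Theorem \ref{l27}(1), then invoke Proposition \ref{sum-su-n}(1) to see that $f+f$ has support of size at most $1$ and is therefore regular. The only minor difference is that the paper notes $|{\rm supp}(f+f)|=1$ exactly (since $f$ and $f$ share the same support, the case $k\ne k'$ in the proof of Proposition \ref{sum-su-n}(1) does not arise), whereas you allow $0$ or $1$; this does not affect the argument.
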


\begin{proof}
In view of Theorem \ref{l27}, it remains to show that, for each $f \in A^+(B_n)_n$, there is a number $m$ such that $mf$ is regular. Now, for $f \in A^+(B_n)_n$,  since $f + f \in A^+(B_n)_1$ (cf.  Proposition \ref{sum-su-n}),  we have $2f$ is regular. Hence, the semigroup $A^+(B_n)^{^+}$ is eventually regular.
\end{proof}

For $n \geq 2$, let $K$ be the set of regular elements in $A^+(B_n)^{^+}$. By Theorem \ref{l27}, $K = A^+(B_n) \setminus A^+(B_n)_n$. Further, by Theorem \ref{l27}(2), all the idempotents of $A^+(B_n)^{^+}$ are in $K$. We prove the following theorem concerning the set $K$.

\begin{theorem}\label{l27b}
 $(K, +)$ is an inverse semigroup.
\end{theorem}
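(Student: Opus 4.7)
The plan is to verify that $(K,+)$ meets the classical criterion for an inverse semigroup: it is regular and its idempotents commute. Regularity is already available: by Theorem \ref{l27}(1), $K$ is exactly the set of elements of $A^+(B_n)^{^+}$ whose support has cardinality different from $n$, and these are all regular. Moreover, by Theorem \ref{l27}(2), every idempotent of $A^+(B_n)^{^+}$ lies in $K$, so it suffices to check that the idempotents listed there commute inside $K$.

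First I would confirm that $(K,+)$ is a subsemigroup of $A^+(B_n)^{^+}$. From Theorem \ref{l27}(1) together with Corollary \ref{c.fu-cst}, we have the description $K = \mathcal{C}_{B_n} \cup A^+(B_n)_1$. By Proposition \ref{p.sub-sg}(1), $\mathcal{C}_{B_n}$ is a subsemigroup, and by Proposition \ref{p.sub-sg}(2), $A^+(B_n)_1 \cup \{\xi_\vt\}$ is an ideal of $A^+(B_n)^{^+}$. Hence a sum of two elements of $K$ either stays inside $\mathcal{C}_{B_n}$ (when both summands are constant) or falls into the ideal $A^+(B_n)_1 \cup \{\xi_\vt\} \subseteq K$ (when at least one summand has singleton support). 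Thus $K$ is closed under $+$.

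Next, I would show that idempotents commute. By Theorem \ref{l27}(2),
\[
I(K) = \{\xi_\alpha \mid \alpha \in I(B_n)\} \;\cup\; \left\{\sis{(i,j)}{(k,k)} \mid i,j,k \in [n]\right\}.
\]
Commutativity of two idempotents both taken from $\mathcal{C}_{B_n}$ is immediate from the isomorphism $\mathcal{C}_{B_n} \cong B_n$ in Proposition \ref{p.sub-sg}(1), since $B_n$ is a Brandt (hence inverse) semigroup. Similarly, commutativity of two idempotents both taken from $A^+(B_n)_1 \cup \{\xi_\vt\}$ follows from its identification with a $0$-direct union of copies of $B_n$ in Proposition \ref{p.sub-sg}(2). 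The only remaining case is a cross pair $f = \xi_{(a,a)}$ and $g = \sis{(i,j)}{(k,k)}$, which I would handle by direct pointwise computation: on the unique element $(i,j)$ of $\mathrm{supp}(g)$ one gets $(a,a)+(k,k)$ from $f+g$ and $(k,k)+(a,a)$ from $g+f$, both equal to $(k,k)$ if $a=k$ and to $\vt$ otherwise; on every other point both sums evaluate to $\vt$. Hence $f+g = g+f$ in every case (it equals $g$ if $a=k$ and $\xi_\vt$ otherwise).

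The only genuine work is this cross-case check — everything else is bookkeeping built on Proposition \ref{p.sub-sg} and Theorem \ref{l27}. Once regularity and commutativity of idempotents are in hand, $(K,+)$ is an inverse semigroup by the standard theorem, completing the proof.
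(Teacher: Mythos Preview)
Your proposal is correct and follows essentially the same approach as the paper: verify closure, regularity, and commutativity of idempotents, then invoke the standard characterization of inverse semigroups. The only organizational difference is that you lean on the isomorphisms of Proposition~\ref{p.sub-sg} for the within-$\mathcal{C}_{B_n}$ and within-$(A^+(B_n)_1\cup\{\xi_\vt\})$ cases, whereas the paper handles all cases by direct pointwise computation; the cross-case computation you give is identical in spirit to the paper's Case~2.
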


\begin{proof}
Let $f, g \in K$. If one of them is the zero map, then $|{\rm supp}(f + g)| = 0$. If one of them is of singleton support, then by Lemma \ref{r2}, $|{\rm supp}(f + g)| \le 1$. If both $f$ and $g$ are of full support, then $|{\rm supp}(f + g)| = n^2+1$ or $0$. Thus, in any case, $f + g \in K$. Hence, $(K, +)$ is a regular semigroup.

Referring to \cite[Theorem 1.2]{hw76}, it is sufficient to show that the idempotents in $K$ commute. Let $f, g  \in I(K)$. If one of them is the zero map, then $f + g = g + f = \xi_\vt$. Otherwise, we have the following cases.
\begin{description}
  \item[{\rm\em Case 1}] $|{\rm supp}(f)| = |{\rm supp}(g)|$. Then,
   \[f+g = g+f =  \begin{cases}
                f & \text {if $f  = g$; }  \\
                \xi_\vt     & \text {otherwise.}
                  \end{cases} \]
  \item[{\rm\em Case 2}] $|{\rm supp}(f)| \neq |{\rm supp}(g)|$. Say, $|{\rm supp}(f)| = n^2+1$ and $|{\rm supp}(g)| = 1$. Then, \[f+g = g+f = \begin{cases}
                g & \text {if $\text{Im}(f) = \text{Im}(g) \setminus \{\vartheta\}$; }  \\
                \xi_\vt     & \text {otherwise.}
                  \end{cases} \]
\end{description}
Thus, $I(K)$ is commutative. Hence, $(K, +)$ is an inverse semigroup
\end{proof}

\section{Structure of $(A^+(B_n), \circ)$}

In this section, we study the multiplicative semigroup structure of the affine near-semiring  $A^+(B_n)$ via Green's relations. First we characterize Green's classes of $(A^+(B_n), \circ)$ and find their sizes.  We observe that the semigroup $(A^+(B_n), \circ)$ is  regular (cf. Theorem \ref{t.rec}) and orthodox (cf. Theorem \ref{t.oc}). Further, we investigate the idempotent elements of $A^+(B_n)$ and certain relevant subsemigroups.

The semigroup reduct $(A^+(B_n), \circ)$ of the affine near-semiring $(A^+(B_n), +, \circ)$ is denoted by $A^+(B_n)^{^\circ}$. Further, in a particular context, if there is no emphasis on the semigroup, we may simply write $A^+(B_n)$. For $f, g \in M(B_n)$, the product $f \circ g$  will simply be written as $fg$.

\subsection{Green's classes of $A^+(B_n)^{^\circ}$}

As mentioned earlier in the case of $A^+(B_n)^{^+}$, the Green's relations $\mathcal{J}$  and $\mathcal{D}$ coincide also on the semigroup $A^+(B_n)^{^\circ}$. In this subsection, we study the Green's relations $\mathcal{R}$, $\mathcal{L}$, $\mathcal{D}$ and $\mathcal{H}$ on  $A^+(B_n)^{^\circ}$.  For $f \in A^+(B_n)^{^\circ}$, the Green's classes containing $f$ of the relations $\mathcal{R}$, $\mathcal{L}$, $\mathcal{D}$ and $\mathcal{H}$ on $A^+(B_n)^{^\circ}$ are again denoted by $R_f$, $L_f$, $D_f$ and $H_f$, respectively.

We begin with the structural properties of the subsemigroups $A^+(B_n)_1 \cup \{\xi_\vt\}$ and $\mathcal{C}_{B_n}$  of  $A^+(B_n)^{^\circ}$.
\begin{remark}\label{a+bnc-fs-subsgp}
The semigroup $(\mathcal{C}_{B_n}, \circ)$ has right zero multiplication, i.e. $fg = g$, for all $f, g \in \mathcal{C}_{B_n}$. Consequently, $(\mathcal{C}_{B_n}, \circ)$ is regular.
\end{remark}

\begin{proposition}\label{a+bnc-sis-subsgp}
 The semigroup $(A^+(B_n)_1 \cup \{\xi_\vt \}, \circ)$ is isomorphic to $(B_{n^2}, +)$. Hence, $(A^+(B_n)_1 \cup \{\xi_\vt \}, \circ)$ is regular.
\end{proposition}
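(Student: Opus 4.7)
The plan is to exhibit an explicit isomorphism from $(A^+(B_n)_1 \cup \{\xi_\vt\}, \circ)$ onto the Brandt semigroup $(B_{n^2}, +)$, by indexing the nonzero elements of $B_{n^2}$ by ordered pairs from $[n]\times[n]$ (so that $B_{n^2}\setminus\{\vt\}$ is identified with $([n]\times[n])\times([n]\times[n])$). Define
\[\psi : A^+(B_n)_1 \cup \{\xi_\vt\} \longrightarrow B_{n^2}, \qquad \sis{(k,l)}{(p,q)} \longmapsto ((k,l),(p,q)), \quad \xi_\vt \longmapsto \vt.\]
This is a set bijection: by Lemma \ref{l12d} the domain has $n^4+1$ elements, and $|B_{n^2}| = n^4+1$.

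The main computation is to check that $\psi$ preserves the operation, by evaluating the composition of two singleton-support maps pointwise. For $f = \sis{(k,l)}{(p,q)}$ and $g = \sis{(k',l')}{(p',q')}$, I would go through $\alpha \in B_n$: if $\alpha \neq (k,l)$ then $\alpha f = \vt$ and so $\alpha(fg) = \vt g = \vt$ (because $\vt \neq (k',l')$); if $\alpha = (k,l)$ then $\alpha f = (p,q)$ and $(p,q)g$ equals $(p',q')$ when $(p,q)=(k',l')$, and $\vt$ otherwise. This yields
\[\sis{(k,l)}{(p,q)} \circ \sis{(k',l')}{(p',q')} \;=\; \begin{cases} \sis{(k,l)}{(p',q')} & \text{if } (p,q)=(k',l'), \\ \xi_\vt & \text{otherwise,}\end{cases}\]
which is exactly the Brandt multiplication rule applied under $\psi$: $((k,l),(p,q)) + ((k',l'),(p',q'))$ equals $((k,l),(p',q'))$ when the ``inner'' coordinates match and $\vt$ otherwise. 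The boundary cases are straightforward: $\xi_\vt \circ h = \xi_\vt = h \circ \xi_\vt$ for every $h$ in the domain, since $\vt$ lies in no support of any element of $A^+(B_n)_1$ and the image of $\xi_\vt$ is $\{\vt\}$; this matches the absorbing role of $\vt$ in $B_{n^2}$.

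Once $\psi$ is established as a semigroup isomorphism, the ``Hence'' clause is immediate: $B_{n^2}$ is a Brandt semigroup, hence (completely $0$-simple and) regular, so $(A^+(B_n)_1 \cup \{\xi_\vt\},\circ)$ inherits regularity via $\psi$. The only genuinely routine obstacle is the pointwise case analysis of the composition, but there is nothing subtle there — the singleton-support structure makes each composition collapse to a single nonzero value or the zero map, mirroring the Brandt product exactly.
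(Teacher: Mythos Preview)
Your proof is correct and follows exactly the same approach as the paper: both define the isomorphism $\sis{(k,l)}{(p,q)} \mapsto ((k,l),(p,q))$, $\xi_\vt \mapsto \vt$, and deduce regularity from that of $B_{n^2}$. The paper simply asserts that this assignment is ``clearly'' an isomorphism, whereas you supply the explicit verification of the homomorphism property and the bijection count.
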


\begin{proof}
Clearly, the assignment $\sis{(k, l)}{(p, q)} \mapsto ((k, l), (p, q))$ and $\xi_\vt \mapsto \vt$ is a semigroup isomorphism from $(A^+(B_n)_1 \cup \{\xi_\vt \}, \circ)$ to $(B_{n^2}, +)$. Hence, since the semigroup $(B_{n^2}, +)$ is regular, $(A^+(B_n)_1 \cup \{\xi_\vt \}, \circ)$ is regular.
\end{proof}

\begin{lemma}\label{l18}
If $g$ is a nonconstant map in $A ^+(B_n)$, then ${\rm supp}(fg) \subseteq {\rm supp}(f)$.
\end{lemma}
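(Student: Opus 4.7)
The plan is to prove the contrapositive pointwise: if $\alpha \notin {\rm supp}(f)$, then $\alpha \notin {\rm supp}(fg)$. Since composition is evaluated as $\alpha(fg) = (\alpha f)g$, the assumption $\alpha f = \vt$ reduces the question to showing $\vt g = \vt$ whenever $g$ is a nonconstant element of $A^+(B_n)$.

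The essential input is Proposition~\ref{p.vt-su}, which says that any $g \in A^+(B_n)$ with $\vt \in {\rm supp}(g)$ must be a nonzero constant map. Taking the contrapositive, a nonconstant $g \in A^+(B_n)$ (this excludes both nonzero constants and $\xi_\vt$) satisfies $\vt \notin {\rm supp}(g)$, i.e.\ $\vt g = \vt$. Alternatively, one can read this directly off the classification in Theorem~\ref{t2}: the nonconstant elements are either in $A^+(B_n)_n$ or in $A^+(B_n)_1$; in the former case Remark~\ref{r.re-su-n} gives ${\rm supp}(g) = \{(i,k) : i \in [n]\}$, and in the latter the support is a single non-$\vt$ element, so in either case $\vt \notin {\rm supp}(g)$.

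Assembling the pieces: for $\alpha \in B_n$ with $\alpha f = \vt$, we get
\[
\alpha(fg) = (\alpha f)g = \vt g = \vt,
\]
so $\alpha \notin {\rm supp}(fg)$. This yields ${\rm supp}(fg) \subseteq {\rm supp}(f)$. There is no real obstacle beyond recognizing that Proposition~\ref{p.vt-su} is exactly the hypothesis needed to control the behavior of $g$ at $\vt$; the hypothesis ``nonconstant'' is used precisely to invoke that proposition.
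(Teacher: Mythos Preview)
Your proof is correct and follows essentially the same route as the paper: both arguments reduce to the fact that a nonconstant $g \in A^+(B_n)$ satisfies $\vt g = \vt$, invoking Proposition~\ref{p.vt-su} for this. The only cosmetic difference is that the paper argues directly (taking $\alpha \in {\rm supp}(fg)$ and showing $\alpha \in {\rm supp}(f)$) while you argue the contrapositive; the logical content is identical.
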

\begin{proof}
If $fg$ is the zero map, then the result is straightforward. Let  $fg \ne \xi_{\vt}$ and $\alpha \in {\rm supp}(fg)$. Then, $\vt \ne \alpha (fg)  = (\alpha f)g$ so that  $\alpha f \in {\rm supp}(g)$. Since $g$ is not a constant map, by Proposition \ref{p.vt-su},  $\alpha f \neq \vartheta$ so that $\alpha \in {\rm supp}(f).$ Hence, ${\rm supp}(fg) \subseteq {\rm supp}(f)$.
\end{proof}

We present a characterization of the Green's relation $\mathcal{R}$ on $A^+(B_n)^{^\circ}$ in the following theorem.

\begin{theorem}\label{t.chr-rc}
For $f, g \in A^+(B_n)^{^\circ}$, we have
\begin{enumerate}
\item if $f, g \in \mathcal{C}_{B_n}$, then $f \mathcal{R} g$;
\item if $f, g \not\in \mathcal{C}_{B_n}$, then $f \mathcal{R} g   \Longleftrightarrow {\rm supp}(f) = {\rm supp}(g).$
\end{enumerate}
Moreover, for $n \ge 2$, the number of $\mathcal{R}$-classes in $A^+(B_n)^{^\circ}$ is $n^2 + n + 1.$
\end{theorem}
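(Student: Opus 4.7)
The plan is to handle part (1) directly from the right-zero multiplicative structure of $\mathcal{C}_{B_n}$, then split part (2) into forward and backward directions, the latter requiring a further split according to support size, and finally count classes by partitioning $A^+(B_n)$ via Theorem \ref{t2}.

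Part (1) is immediate: by Remark \ref{a+bnc-fs-subsgp}, $(\mathcal{C}_{B_n}, \circ)$ is right zero, so for $f, g \in \mathcal{C}_{B_n}$ we have $g = fg$ and $f = gf$, witnessing $f\mathcal{R}g$ in $A^+(B_n)^{^\circ}$. For the forward direction of (2), suppose $f, g \notin \mathcal{C}_{B_n}$ and $f\mathcal{R}g$; we may assume $f \ne g$, so $fh = g$ and $gh' = f$ for some $h, h' \in A^+(B_n)$. If $h = \xi_\alpha$, then $fh = \xi_\alpha$ would force $g \in \mathcal{C}_{B_n}$, a contradiction; so $h$ is nonconstant, and Lemma \ref{l18} yields ${\rm supp}(g) \subseteq {\rm supp}(f)$. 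The symmetric argument applied to $h'$ gives the reverse inclusion.

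For the backward direction, assume ${\rm supp}(f) = {\rm supp}(g)$ with $f, g \notin \mathcal{C}_{B_n}$. By Theorem \ref{t2} and Corollary \ref{c.fu-cst}, the only possible support sizes are $1$ or $n$, since the $0$-support and $(n^2+1)$-support elements of $A^+(B_n)$ all lie in $\mathcal{C}_{B_n}$. When $f, g \in A^+(B_n)_1$, Proposition \ref{a+bnc-sis-subsgp} gives an isomorphism onto $(B_{n^2}, +)$ under which $\sis{(k,l)}{(p,q)} \mapsto ((k,l), (p,q))$, so the support coincides with the first coordinate; by Remark \ref{R-Bn}, elements of $B_{n^2}$ with equal first coordinate are $\mathcal{R}$-related, hence $f\mathcal{R}g$ in the subsemigroup and therefore in $A^+(B_n)^{^\circ}$. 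When $f, g \in A^+(B_n)_n$, their supports force their representations to take the form $(k, q; \sigma)$ and $(k, q'; \sigma')$ with a common first parameter; I take $h \in A^+(B_n)_n$ with representation $(q, q'; \sigma^{-1}\sigma')$. A direct check shows
\[(i, k)(fh) = (i\sigma, q)h = (i\sigma \sigma^{-1}\sigma', q') = (i\sigma', q') = (i, k)g,\]
while for $\alpha \notin {\rm supp}(f)$ both $\alpha(fh)$ and $\alpha g$ equal $\vt$ (noting $\vt h = \vt$ since $h \in {\rm Aff}(B_n)_n$). Hence $fh = g$, and a symmetric construction yields $h'$ with $gh' = f$.

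For the count, partition $A^+(B_n)$ by support size: $\mathcal{C}_{B_n}$ contributes a single $\mathcal{R}$-class, the singleton-support maps split into $n^2$ classes indexed by the possible supports $\{(k,l)\} \subseteq [n] \times [n]$, and the $n$-support maps split into $n$ classes indexed by the common second coordinate $k \in [n]$ of the support, giving a total of $1 + n^2 + n$ classes. The main obstacle is the explicit construction of $h$ in the $n$-support subcase: one must select the triplet $(q, q'; \sigma^{-1}\sigma')$ correctly and verify that $fh$ agrees with $g$ both on and off ${\rm supp}(f)$. Everything else reduces to invoking earlier structural results.
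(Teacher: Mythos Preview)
Your proof is correct and follows essentially the same approach as the paper: both use the right-zero structure of $\mathcal{C}_{B_n}$ for part~(1), Lemma~\ref{l18} for the forward direction of part~(2), Proposition~\ref{a+bnc-sis-subsgp} with Remark~\ref{R-Bn} for the singleton-support subcase, and an explicit $n$-support map $h$ for the $n$-support subcase. Your choice $h = (q, q'; \sigma^{-1}\sigma')$ is exactly the paper's map $(p, q; \tau)$ with $\tau$ defined by $i\sigma\tau = i\sigma'$, just written in closed form rather than implicitly.
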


\begin{proof}$\;$
\begin{enumerate}
\item By Remark \ref{a+bnc-fs-subsgp}, any two elements of $\mathcal{C}_{B_n}$ are $\mathcal{R}$-related. Thus, $\mathcal{C}_{B_n}$ has a single $\mathcal{R}$-class containing all the constant maps.

\item If $f \mathcal{R} g$ with $f \ne g$, then there exist $h,h' \in A ^+(B_n)$ such that $fh = g$ and $gh' = f$. Note that $h$ and $h'$ are nonconstant maps; otherwise, $f$ and $g$ will be constant maps. Now,  by Lemma \ref{l18}, $ {\rm supp}(g) = {\rm supp}(fh) \subseteq {\rm supp}(f)$ and  ${\rm supp}(f)  = {\rm supp}(gh') \subseteq {\rm supp}(g)$. Hence, ${\rm supp}(f) = {\rm supp}(g)$.

Conversely, suppose ${\rm supp}(f) = {\rm supp}(g)$. If $f,g \in A^+(B_n)_1$, by Remark \ref{R-Bn} and Proposition \ref{a+bnc-sis-subsgp}, we get $f \mathcal{R} g$. Consequently, $A^+(B_n)_1$ has $n^2$ $\mathcal{R}$-classes each of size $n^2$. On the other hand, let $f = (k,p ; \sigma)$ and $g = (k,q; \sigma')$. Since $\sigma$ and $\sigma'$ are permutation on $[n]$, define the bijection $\tau : [n] \rightarrow [n]$ by $i \sigma \tau = i \sigma'$, for all $i \in[n]$. Now consider the $n$-support maps $h = (p, q; \tau)$ and $h' = (q, p; \tau^{-1})$ in $A^+(B_n)$.
For $i \in [n]$, we have $$(i, k)(fh) = (i\sigma, p)h = (i \sigma \tau, q) = (i \sigma', q) = (i, k)g;$$ and, for
$\alpha \in B_n \setminus {\rm supp}(f), \alpha (fh) = \vt h = \vt = \alpha g$. Thus, $fh = g$. Similarly,  $gh' = f$. Hence, $f \mathcal{R} g$. Thus, $A^+(B_n)_n$ contains $n$ $\mathcal{R}$-classes each of size $(n!)n$.
\end{enumerate}
Hence, for $n \ge 2$, the number of $\mathcal{R}$-classes in $A^+(B_n)^{^\circ}$ is $n^2 + n + 1.$
\end{proof}

We present a characterization of the Green's relation $\mathcal{L}$ on $A^+(B_n)^{^\circ}$ in the following theorem.

\begin{theorem}\label{t.chr-lc}
 For $f,g \in A ^+(B_n)^{^\circ}$, $f \mathcal{L}g \Longleftrightarrow {\rm Im}(f) = {\rm Im}(g)$. Moreover, for $n \ge 2$, the number of $\mathcal{L}$-classes in $A^+(B_n)^{^\circ}$ is $2n^2 + n + 1.$
\end{theorem}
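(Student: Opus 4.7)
The plan is to establish the biconditional in two directions and then enumerate the $\mathcal{L}$-classes via the image sets of the elements described in Theorem \ref{t2}.

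For the forward direction, suppose $f \mathcal{L} g$. Either $f = g$, in which case ${\rm Im}(f) = {\rm Im}(g)$ trivially, or there exist $h, h' \in A^+(B_n)$ such that $hf = g$ and $h'g = f$. Since $\alpha(hf) = (\alpha h)f \in {\rm Im}(f)$ for every $\alpha \in B_n$, we obtain ${\rm Im}(g) \subseteq {\rm Im}(f)$, and the reverse inclusion follows symmetrically from $h'g = f$.

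For the converse, I would exploit the classification of $A^+(B_n)$ in Theorem \ref{t2}, noting the shape of ${\rm Im}(f)$ for each type: $\xi_\vt$ has image $\{\vt\}$; a nonzero constant $\xi_\alpha$ has image $\{\alpha\}$; a singleton support map $\sis{(k, l)}{(p, q)}$ has image $\{(p, q), \vt\}$ of size $2$; and an $n$-support map with representation $(k, q; \sigma)$ has image $\{(j, q) \mid j \in [n]\} \cup \{\vt\}$ of size $n + 1$ (cf. Remark \ref{r.re-su-n}). For $n \ge 2$ these four image sizes are pairwise distinct, so ${\rm Im}(f) = {\rm Im}(g)$ forces $f$ and $g$ to be of the same type. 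The zero and nonzero constant cases are trivial since the image determines the map. When $f, g \in A^+(B_n)_1$ share their image, Proposition \ref{a+bnc-sis-subsgp} transports the problem to $(B_{n^2}, +)$, where two nonzero elements are $\mathcal{L}$-related iff they agree in the second coordinate, which translates exactly to equality of images. For $f = (k, q; \sigma)$ and $g = (k', q; \sigma')$ in $A^+(B_n)_n$, I would explicitly verify that the $n$-support affine map $h$ with representation $(k', k; \sigma' \sigma^{-1})$ satisfies $hf = g$: on ${\rm supp}(g)$ we compute $(i, k')(hf) = (i \sigma' \sigma^{-1}, k) f = (i \sigma' \sigma^{-1} \sigma, q) = (i \sigma', q) = (i, k') g$, while outside ${\rm supp}(g) = {\rm supp}(h)$ both $hf$ and $g$ send the argument to $\vt$. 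The witness $h'$ for $h'g = f$ is built analogously via $(k, k'; \sigma (\sigma')^{-1})$.

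To count the $\mathcal{L}$-classes, it suffices to count the distinct images arising in $A^+(B_n)^{^\circ}$: one class for $\xi_\vt$; $n^2$ classes corresponding to the nonzero constants $\xi_\alpha$; $n^2$ classes corresponding to the two-element images $\{(p, q), \vt\}$ realized by singleton support maps; and $n$ classes corresponding to the images $\{(j, q) \mid j \in [n]\} \cup \{\vt\}$ indexed by $q \in [n]$ for the $n$-support maps. Summing yields $1 + n^2 + n^2 + n = 2n^2 + n + 1$.

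The main obstacle is the $n$-support case of the converse, since we must produce a witness $h$ inside $A^+(B_n)$ rather than in all of $M(B_n)$. The key observation is that the realignment from $g$'s support block $\{(i, k') \mid i \in [n]\}$ to $f$'s support block $\{(j, k) \mid j \in [n]\}$ combined with the permutation correction $\sigma' \sigma^{-1}$ is itself realized by a map of the form $(k', k; \sigma' \sigma^{-1})$, which lies in ${\rm Aff}(B_n)_n \subseteq A^+(B_n)$ by Theorem \ref{t3}.
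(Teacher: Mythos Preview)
Your proof is correct and follows essentially the same route as the paper: the forward direction via image containment, the converse by case analysis on support type using Proposition \ref{a+bnc-sis-subsgp} for singleton support and an explicit witness $h=(k',k;\sigma'\sigma^{-1})$ for $n$-support, and the same enumeration of image sets for the count. One small verbal slip: the four image \emph{sizes} you list are $1,1,2,n+1$, so they are not literally pairwise distinct; what you actually need (and immediately use) is that the image \emph{sets} themselves separate the types, which is fine since for constants the image determines the map.
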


\begin{proof}
If $f \mathcal{L} g$ with $f \ne g$, then there exist $h,h' \in A ^+(B_n)$ such that $hf= g$ and $h'g = f$. Since ${\rm Im}(g) = {\rm Im}(hf) \subseteq {\rm Im}(f)$ and $ {\rm Im}(f) = {\rm Im}(h'g) \subseteq {\rm Im}(g)$, we have ${\rm Im}(f) = {\rm Im}(g)$.

Conversely, suppose ${\rm Im}(f) = {\rm Im}(g)$ so that $|{\rm supp}(f)| = |{\rm supp}(g)|$. Clearly, the $\mathcal{L}$-classes in $\mathcal{C}_{B_n}$ are singletons. Consequently, $\mathcal{C}_{B_n}$ has $n^2 + 1$  $\mathcal{L}$-classes. If  $f,g \in A^+(B_n)_1$, by Remark \ref{R-Bn} and Proposition \ref{a+bnc-sis-subsgp},  we have $f \mathcal{L} g$ and hence, there are $n^2$  $\mathcal{L}$-classes in $A^+(B_n)_1$ each is of size $n^2$. Otherwise, let $f = (k,p; \sigma)$ and $g = (l,p; \rho)$. Set $h = (l,k; \rho\sigma^{-1})$ and $h' = (k,l; \sigma\rho^{-1}) \in A^+(B_n)$. For $i \in [n]$, we have \[(i, l)(hf) = (i \rho\sigma^{-1}, k)f = (i \rho\sigma^{-1} \sigma, p) = (i \rho, p) = (i, l)g;\] and, for $\alpha \in B_n \setminus {\rm supp}(h)$, \[ \alpha(hf) = \vt f = \vt = \alpha g,\] as ${\rm supp}(h) = {\rm supp}(g)$. Thus, $hf = g$. Similarly, we can observe that  $h'g = f$. Hence, $f \mathcal{L} g$. Consequently, we have $n$ $\mathcal{R}$-classes containing $n$-support elements each is of size $(n!)n$.

Hence,  for $n \ge 2$, the number of $\mathcal{L}$-classes in $A^+(B_n)^{^\circ}$ is $2n^2 + n + 1.$
\end{proof}

Following characterization of the Green's relation $\mathcal{H}$ on $A^+(B_n)^{^\circ}$ is a corollary of theorems \ref{t.chr-rc} and \ref{t.chr-lc}.

\begin{corollary}\label{t.chr-hc}
For $f, g \in A^+(B_n)^{^\circ}$, $f \mathcal{H}g $ if and only if  ${\rm Im}(f) = {\rm Im}(g)$ and ${\rm supp}(f) = {\rm supp}(g)$.
Moreover, for $n \geq 2$, the number of $\mathcal{H}$-classes in $A^+(B_n)^{^\circ}$ is $n^4 + 2n^2 + 1.$
\end{corollary}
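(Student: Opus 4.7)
The plan is to derive the characterization directly from the identity $\mathcal{H} = \mathcal{R} \cap \mathcal{L}$ and then count $\mathcal{H}$-classes by partitioning $A^+(B_n)$ along the three support classes listed in Theorem \ref{t2} (constants, singleton-support, $n$-support).

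For the biconditional, the forward direction is essentially immediate: $f \mathcal{L} g$ supplies ${\rm Im}(f) = {\rm Im}(g)$ by Theorem \ref{t.chr-lc}. For the support equality I would split on cases of Theorem \ref{t.chr-rc}: if $f, g \in \mathcal{C}_{B_n}$, then the equal-image condition already forces $f = g$ (a constant map is determined by its image), so supports coincide trivially; if $f, g \notin \mathcal{C}_{B_n}$, Theorem \ref{t.chr-rc}(2) gives ${\rm supp}(f) = {\rm supp}(g)$ outright. The converse direction requires ruling out a mismatch between a constant and a nonconstant with the same support; here Corollary \ref{c.fu-cst} is the key tool, since an element of $A^+(B_n)$ with full support is forced to be a nonzero constant map, and the zero map is the unique element with empty support. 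Once that is observed, the cases either both-constant (where equal images force $f = g$) or both-nonconstant (Theorems \ref{t.chr-rc}(2) and \ref{t.chr-lc} combine to give $\mathcal{H}$) complete the proof.

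For the enumeration I would tally $\mathcal{H}$-classes separately within each of the three support strata. On $\mathcal{C}_{B_n}$, the characterization makes every constant its own $\mathcal{H}$-class, giving $n^2+1$ classes. On $A^+(B_n)_1$, a singleton-support map $\sis{(k,l)}{(p,q)}$ is uniquely determined by the pair (support, image), so again every $\mathcal{H}$-class is a singleton, contributing $n^4$. On $A^+(B_n)_n$, using Definition \ref{d.re-su-n}, a map with representation $(k,q;\sigma)$ has support depending only on $k$ and image depending only on $q$ (since $\sigma$ is a permutation of $[n]$); hence $\mathcal{H}$-classes are indexed by pairs $(k,q) \in [n] \times [n]$ with each class of size $n!$, contributing $n^2$ more. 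Summing gives $(n^2+1) + n^4 + n^2 = n^4 + 2n^2 + 1$.

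The main obstacle is the $\Leftarrow$ direction of the characterization, because the hypothesis "$\rm{supp}(f) = \rm{supp}(g)$" does not by itself force $f$ and $g$ to both be constant or both nonconstant; one has to exclude via Corollary \ref{c.fu-cst} the possibility of a genuine constant coexisting with a nonconstant full-support map (no such nonconstant map exists), and likewise for the empty-support case. Everything else is a bookkeeping exercise over the already-established structure of $\mathcal{R}$ and $\mathcal{L}$.
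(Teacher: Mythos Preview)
Your proposal is correct and is precisely the natural elaboration the paper leaves implicit: the corollary is stated without proof, as an immediate consequence of Theorems \ref{t.chr-rc} and \ref{t.chr-lc}, and your argument unpacks exactly that. Your use of Corollary \ref{c.fu-cst} to rule out the mixed constant/nonconstant case under equal supports, and your stratified count over $\mathcal{C}_{B_n}$, $A^+(B_n)_1$, and $A^+(B_n)_n$, are the right moves and match the structure the paper has already established.
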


We characterize the  Green's relation $\mathcal{D}$ in the following theorem.

\begin{theorem}\label{acd}
For $f,g \in A^+(B_n)^{^\circ}$, $f \mathcal{D}g \Longleftrightarrow |{\rm supp}(f)| = |{\rm supp}(g)|$ or $f, g \in \mathcal{C}_{B_n}$. Hence, for $n \ge 2$, the number of $\mathcal{D}$-classes in $A^+(B_n)^{^\circ}$ is $3$.
\end{theorem}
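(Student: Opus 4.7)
The plan is to use the factorization $\mathcal{D} = \mathcal{R} \circ \mathcal{L}$ together with the explicit descriptions of $\mathcal{R}$ and $\mathcal{L}$ just obtained in Theorems \ref{t.chr-rc} and \ref{t.chr-lc}. The crucial preliminary observation is that $\mathcal{C}_{B_n}$ is closed under both $\mathcal{R}$ and $\mathcal{L}$ inside $A^+(B_n)^{^\circ}$: if $f$ is constant with $\gamma f = \alpha$ for every $\gamma \in B_n$, then for any $u \in A^+(B_n)$ we have $\gamma(fu) = (\gamma f)u = \alpha u$, which is independent of $\gamma$, so $fu$ is constant; applying the same reasoning to the witnesses of $\mathcal{R}$ and $\mathcal{L}$ shows that both the $\mathcal{R}$-class and $\mathcal{L}$-class of a constant lie entirely inside $\mathcal{C}_{B_n}$, and therefore so does its $\mathcal{D}$-class.

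For the forward direction, suppose $f \mathcal{D} g$ and pick an intermediate $h$ with $f \mathcal{R} h \mathcal{L} g$. If $f \in \mathcal{C}_{B_n}$, the closure observation forces $h, g \in \mathcal{C}_{B_n}$, giving the second disjunct of the statement. Otherwise $h \notin \mathcal{C}_{B_n}$, and Theorems \ref{t.chr-rc}(2) and \ref{t.chr-lc} give ${\rm supp}(f) = {\rm supp}(h)$ and ${\rm Im}(h) = {\rm Im}(g)$. By the classification in Theorem \ref{t2}, non-constant elements of $A^+(B_n)$ have support size $1$ or $n$, with corresponding image sizes $2$ and $n+1$ respectively (distinct for $n \ge 2$); the equality $|{\rm Im}(h)| = |{\rm Im}(g)|$ therefore forces $|{\rm supp}(g)| = |{\rm supp}(h)| = |{\rm supp}(f)|$.

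For the reverse direction I would exhibit a bridge $h$ case by case. Two constants are $\mathcal{R}$-related by Theorem \ref{t.chr-rc}(1), hence $\mathcal{D}$-related. For $f, g \in A^+(B_n)_1$, writing $f = \sis{(k, l)}{(p, q)}$ and $g = \sis{(k', l')}{(p', q')}$, the map $h = \sis{(k, l)}{(p', q')}$ satisfies ${\rm supp}(f) = {\rm supp}(h)$ and ${\rm Im}(h) = {\rm Im}(g)$, so $f \mathcal{R} h \mathcal{L} g$. For $f, g \in A^+(B_n)_n$ with representations $f = (k, p; \sigma)$ and $g = (l, q; \rho)$, I would take $h = (k, q; \rho)$, for which Remark \ref{r.re-su-n} gives ${\rm supp}(h) = \{(i, k) : i \in [n]\} = {\rm supp}(f)$ and ${\rm Im}(h) = \{(i\rho, q) : i \in [n]\} \cup \{\vt\} = {\rm Im}(g)$, so $f \mathcal{R} h \mathcal{L} g$ again. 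The three $\mathcal{D}$-classes are then precisely $\mathcal{C}_{B_n}$, $A^+(B_n)_1$, and $A^+(B_n)_n$, yielding the claimed count of $3$. The main obstacle is the forward direction, where one must glue together the closure of $\mathcal{C}_{B_n}$ under $\mathcal{R}$ and $\mathcal{L}$ with the image-size data from Theorem \ref{t2} to rule out a singleton-support map being $\mathcal{D}$-related to an $n$-support map.
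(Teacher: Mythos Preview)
Your argument is correct and follows essentially the same route as the paper's proof: both directions use the factorisation of $\mathcal{D}$ through an intermediate element together with Theorems~\ref{t.chr-rc} and~\ref{t.chr-lc}. The only cosmetic differences are that you use the decomposition $f\,\mathcal{R}\,h\,\mathcal{L}\,g$ where the paper uses $f\,\mathcal{L}\,h\,\mathcal{R}\,g$ (so your bridge in the $n$-support case is $(k,q;\rho)$ rather than the paper's $(l,p;\tau)$), and that for $A^+(B_n)_1$ you exhibit an explicit bridge whereas the paper simply invokes Proposition~\ref{a+bnc-sis-subsgp} and Remark~\ref{R-Bn}.
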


\begin{proof}
 For $f,g \in A^+(B_n)^{^\circ}$, observe that
\begin{eqnarray*}
f \mathcal{D} g &\Rightarrow& \mbox{there exists } h \in A^+(B_n)^{^\circ} \mbox{ such that } f \mathcal{L} h \mbox{ and } h \mathcal{R} g\\
&\Rightarrow& {\rm Im}(f) = {\rm Im}(h) \mbox{ and } h \mathcal{R} g\; (\mbox{by Theorem \ref{t.chr-lc}})\\
&\Rightarrow& |{\rm supp}(f)| = |{\rm supp}(h)| \mbox{ and } (\mbox{either } {\rm supp}(h) = {\rm supp}(g) \mbox{ or } h, g \in \mathcal{C}_{B_n})\\
&&(\mbox{by Theorem \ref{t.chr-rc}})\\
&\Rightarrow& \mbox{either } |{\rm supp}(f)| = |{\rm supp}(g)| \mbox{ or } f, g \in \mathcal{C}_{B_n}\\
\end{eqnarray*}
Conversely, if $f, g \in \mathcal{C}_{B_n}$, then by Theorem \ref{t.chr-rc}, $f \mathcal{R} g$ so that $f \mathcal{D} g$. If $f, g \in A^+(B_n)_1$, then, by Remark \ref{R-Bn} and Proposition \ref{a+bnc-sis-subsgp}, $f \mathcal{D} g$. Finally, let $f, g \in A^+(B_n)_n$ such that $f = (k, p; \sigma)$ and  $g = (l, q; \rho)$. For $\tau \in S_n$, consider $h = (l,p; \tau) \in A^+(B_n)$. Now, by Theorem \ref{t.chr-lc} and Theorem \ref{t.chr-rc}(2), $ f \mathcal{L} h$ and $ h \mathcal{R} g$ so that $ f \mathcal{D} g.$

Hence, for $n \ge 2$, $A^+(B_n)^{^\circ}$ has three $\mathcal{D}$-classes, viz. $\mathcal{C}_{B_n}$, $A^+(B_n)_1$ and $A^+(B_n)_n$.
\end{proof}

\subsection{Regular elements and idempotents in $A^+(B_n)^{^\circ}$}

In this subsection, we characterize the regular and idempotent elements in $A^+(B_n)^{^\circ}$ and ascertain that $A^+(B_n)^{^\circ}$ is regular. Moreover, it is an orthodox semigroup. We observe that the set excluding the full support elements in $A^+(B_n)^{^\circ}$ forms an inverse semigroup.

\begin{theorem}\label{t.rec}
The semigroup $A^+(B_n)^{^\circ}$ is regular.
\end{theorem}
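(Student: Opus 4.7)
The plan is to exploit the four-way classification of elements of $A^+(B_n)$ coming from Theorem \ref{t2}: the zero map $\xi_\vt$, the singleton-support maps in $A^+(B_n)_1$, the $n$-support maps in $A^+(B_n)_n$, and the nonzero constant maps (the full-support elements). It suffices to show each type is regular in $A^+(B_n)^{^\circ}$, since regularity is a pointwise condition on elements.

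Three of the four types are handled immediately by results already in the subsection. The zero map and every singleton-support map lie in the subsemigroup $A^+(B_n)_1\cup\{\xi_\vt\}$, which by Proposition \ref{a+bnc-sis-subsgp} is isomorphic to $(B_{n^2},+)$ and hence regular. Every nonzero constant map lies in $\mathcal{C}_{B_n}$, which is regular by Remark \ref{a+bnc-fs-subsgp} (right-zero multiplication, so $f\circ f\circ f = f$). Thus the only genuine work is to produce a multiplicative inverse for an arbitrary $n$-support map.

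For the $n$-support case I would argue by direct construction. Given $f\in A^+(B_n)_n$ with representation $(k,p;\sigma)$ in the sense of Definition \ref{d.re-su-n}, define $g\in A^+(B_n)_n$ by the representation $(p,k;\sigma^{-1})$. A routine check using Remark \ref{r.re-su-n} shows that for every $i\in[n]$,
\[
(i,k)(fg) \;=\; (i\sigma,p)g \;=\; (i\sigma\cdot\sigma^{-1},k) \;=\; (i,k),
\]
while $\alpha f = \vt$ and hence $\alpha(fg)=\vt$ for each $\alpha\notin\mathrm{supp}(f)$. Applying $f$ once more therefore reproduces $f$ on $\mathrm{supp}(f)$ and gives $\vt$ elsewhere, so $fgf=f$, proving that $f$ is regular.

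The only potential pitfall is bookkeeping: one must be sure that $g$ as defined actually belongs to $A^+(B_n)$ (it does, since $A^+(B_n)_n=\mathrm{Aff}(B_n)_n$ by Corollary \ref{l12c}, and any triplet $(p,k;\sigma^{-1})$ with $\sigma^{-1}\in S_n$ determines a legitimate representation) and that $\vt g=\vt$, so that $\alpha f=\vt$ propagates correctly through the composition $fg$. Once these are verified, assembling the four cases gives the claim.
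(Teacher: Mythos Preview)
Your proposal is correct and follows essentially the same approach as the paper: reduce to the $n$-support case via Remark \ref{a+bnc-fs-subsgp} and Proposition \ref{a+bnc-sis-subsgp}, then for $f=(k,p;\sigma)$ take $g=(p,k;\sigma^{-1})$ and verify $fgf=f$ directly on $\mathrm{supp}(f)$. The paper computes $(i,k)(fgf)$ in one chain rather than first isolating $fg$, but the construction and verification are the same.
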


\begin{proof}
In view of Remark \ref{a+bnc-fs-subsgp} and Proposition \ref{a+bnc-sis-subsgp}, it is sufficient to show that $n$-support elements in $A^+(B_n)^{^\circ}$ are regular.  Let $f  = (k, p; \sigma) \in A^+(B_n)_n$.  Set $g  = (p, k; \sigma^{-1}) \in A^+(B_n)$. For $(i, k) \in {\rm supp}(f)$ with $i \in [n]$, \[(i, k)(fgf) = (i \sigma, p)(gf) = (i \sigma \sigma^{-1}, k)f = (i, k)f.\]  Hence, $fgf = f$ so that $f$ is regular. Thus, the semigroup $A^+(B_n)^{^\circ}$ is regular.
\end{proof}

Now, in the following theorem, we identify the idempotent elements in $A^+(B_n)^{^\circ}$ and count their number.

\begin{theorem}\label{t.ic}
For $n \ge 2$, \[I(A^+(B_n)^{^\circ}) = \{\xi_\alpha \mid \alpha \in B_n\} \cup \{(k, k; id) \mid k \in [n]\} \cup \left\{\left.\sis{(i, j)}{(i, j)} \right| i, j \in [n]\right\}.\] Hence, $|I(A^+(B_n)^{^\circ})| = 2n^2 + n + 1$.
\end{theorem}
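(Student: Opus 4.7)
The plan is to classify idempotents of $A^+(B_n)^{^\circ}$ by working case by case through the four classes of elements given in Theorem \ref{t2}: constant maps (full support together with the zero map $\xi_\vt$), $n$-support maps, and singleton-support maps. For each class I would compute $f \circ f$ explicitly and determine when it equals $f$.

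First I would handle the constant maps. For $f = \xi_\alpha$ with $\alpha \in B_n$, composition gives $\gamma(f \circ f) = (\gamma f)f = \alpha f = \alpha = \gamma f$, so every element of $\mathcal{C}_{B_n} = \{\xi_\alpha \mid \alpha \in B_n\}$ is an idempotent, contributing $n^2 + 1$ idempotents (including $\xi_\vt$). Next, for the singleton-support case, let $f = \sis{(k,l)}{(p,q)}$, so $(k,l)f = (p,q)$ and $\alpha f = \vt$ elsewhere. Then $(k,l)(f \circ f) = (p,q)f$, which equals $(p,q)$ if $(p,q) = (k,l)$ and equals $\vt$ otherwise. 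Hence $f$ is idempotent exactly when $(p,q) = (k,l)$, giving the $n^2$ idempotents $\sis{(i,j)}{(i,j)}$ for $i,j \in [n]$.

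The main technical step is the $n$-support case. Using Remark \ref{r.re-su-n}, an $n$-support map $f$ with representation $(k, q; \sigma)$ satisfies ${\rm supp}(f) = \{(i,k) \mid i \in [n]\}$ and $(i,k)f = (i\sigma, q)$, while $\alpha f = \vt$ for $\alpha \notin {\rm supp}(f)$. For $f$ to be idempotent, I would evaluate $f^2$ on the support: $(i,k)f^2 = (i\sigma, q)f$. For this to be nonzero (and equal to $(i\sigma, q)$), we must have $(i\sigma, q) \in {\rm supp}(f)$, forcing $q = k$. Under $q = k$, $(i\sigma, k)f = (i\sigma^2, k)$, and the equation $(i\sigma^2, k) = (i\sigma, k)$ for all $i$ forces $\sigma^2 = \sigma$, hence $\sigma = id$ (since $\sigma \in S_n$). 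Thus the only $n$-support idempotents are $(k, k; id)$ for $k \in [n]$, contributing $n$ idempotents.

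Adding the three counts gives $|I(A^+(B_n)^{^\circ})| = (n^2 + 1) + n^2 + n = 2n^2 + n + 1$. The only subtle point is treating the $n$-support case correctly; in particular, one must use the unique representation $(k, q; \sigma)$ from Definition \ref{d.re-su-n} to avoid ambiguity when pinning down $\sigma = id$ and $q = k$. The other classes reduce to immediate case checks from the definitions.
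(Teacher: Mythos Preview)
Your proposal is correct and follows essentially the same case-by-case approach as the paper, partitioning $A^+(B_n)$ by support size and identifying the idempotents in each class. The only cosmetic difference is that the paper invokes Remark \ref{a+bnc-fs-subsgp} (right-zero multiplication on $\mathcal{C}_{B_n}$) and Proposition \ref{a+bnc-sis-subsgp} (the isomorphism $A^+(B_n)_1 \cup \{\xi_\vt\} \cong B_{n^2}$) for the constant and singleton-support cases, whereas you compute $f\circ f$ directly; your treatment of the $n$-support case is in fact more explicit than the paper's, which simply asserts the form $(k,k;id)$ without writing out the computation.
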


\begin{proof}
By Remark \ref{a+bnc-fs-subsgp}, clearly, all the $n^2 + 1$ elements of  $(\mathcal{C}_{B_n}, \circ)$ are idempotents. Since nonzero idempotents in $B_{n^2}$  are of the form $((i, j), (i, j))$, we have, $n^2$ elements of the form $\sis{(i, j)}{(i, j)}$ are idempotents in $A^+(B_n)_1$. Observe that the idempotent elements in $A^+(B_n)_n$ are of the form $(k, k; id)$, where $k \in [n]$ and $id$ is the identity permutation on $[n]$. Thus, there are  $n$ idempotent elements in $A^+(B_n)_n$. Hence, for $n \ge 2$, the number of idempotents in $A^+(B_n)^{^\circ}$ is $2n^2 + n + 1$.
\end{proof}

\begin{theorem}\label{t.oc}
The semigroup $A^+(B_n)^{^\circ}$ is orthodox.
\end{theorem}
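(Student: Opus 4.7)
The plan is to appeal to the standard fact that a regular semigroup is orthodox if and only if its set of idempotents is closed under the semigroup operation. Regularity of $A^+(B_n)^{^\circ}$ has just been established in Theorem \ref{t.rec}, so the remaining task is to verify that $I(A^+(B_n)^{^\circ})$ is a subsemigroup, using the explicit inventory of idempotents from Theorem \ref{t.ic}.

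Those idempotents fall into three families: the constant maps $\xi_\alpha$ with $\alpha \in B_n$, the singleton-support idempotents $\sis{(i,j)}{(i,j)}$, and the $n$-support idempotents $(k,k;id)$. Composing pairs therefore yields nine ordered cases, of which most can be handled uniformly. For any $h \in A^+(B_n)$, a direct evaluation shows $h \circ \xi_\alpha = \xi_\alpha$, disposing of the three cases in which the right factor is constant. Composition inside $\mathcal{C}_{B_n}$ is right-zero (Remark \ref{a+bnc-fs-subsgp}), and composition inside $A^+(B_n)_1 \cup \{\xi_\vt\}$ transports via Proposition \ref{a+bnc-sis-subsgp} to addition in $B_{n^2}$, where the product of two idempotents $((i,j),(i,j))$ and $((k,l),(k,l))$ is either $((i,j),(i,j))$ (when the indices agree) or $\vt$.

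The only genuinely new work lies in the four mixed cases involving an $n$-support idempotent. Using the representation convention and the support description $\{(i,k) \mid i \in [n]\}$ from Remark \ref{r.re-su-n}, I would compute directly: $\xi_\alpha \circ (k,k;id)$ equals $\xi_\alpha$ when the second coordinate of $\alpha$ is $k$ and equals $\xi_\vt$ otherwise; $\sis{(i,j)}{(i,j)} \circ (k,k;id)$ equals $\sis{(i,j)}{(i,j)}$ when $j=k$ and $\xi_\vt$ otherwise; $(k,k;id) \circ \sis{(i,j)}{(i,j)}$ equals $\sis{(i,j)}{(i,j)}$ when $k=j$ and $\xi_\vt$ otherwise; and $(k,k;id) \circ (l,l;id)$ equals $(k,k;id)$ when $k=l$ and $\xi_\vt$ otherwise. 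In every outcome the result lies again in one of the three families listed in Theorem \ref{t.ic}, confirming closure.

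The argument is essentially a finite case check, so the main obstacle is only the bookkeeping in the four mixed compositions: one must track both the support $\{(i,k) \mid i \in [n]\}$ and the fact that $(k,k;id)$ acts as the identity on its support in order to see that matching second coordinates is the sole condition for a nonzero output. Once this is in hand, the closure identities listed above are immediate, and the theorem follows from the regular-plus-idempotent-subsemigroup criterion.
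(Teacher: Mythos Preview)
Your approach is the same as the paper's: invoke regularity from Theorem \ref{t.rec} and then verify that $I(A^+(B_n)^{^\circ})$ is closed under composition by running through the idempotent families of Theorem \ref{t.ic}. The computations you list are all correct.

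There is, however, one case missing from your nine-case enumeration. You dispose of the three cases with a constant \emph{right} factor via $h\circ\xi_\alpha=\xi_\alpha$, then handle singleton--singleton via Proposition \ref{a+bnc-sis-subsgp}, and then the four cases involving an $n$-support idempotent. That accounts for only eight ordered cases: the product $\xi_\alpha \circ \sis{(i,j)}{(i,j)}$ (constant on the left, singleton on the right) is not covered by any of your bullets. The paper handles this more economically by observing at the outset that if \emph{either} factor is a constant map then the composite is again a constant map (and hence idempotent by Remark \ref{a+bnc-fs-subsgp}); this single sentence sweeps up five of the nine cases at once, including the one you omitted. Adding that observation, or simply computing $\xi_\alpha \circ \sis{(i,j)}{(i,j)}=\xi_{(i,j)}$ if $\alpha=(i,j)$ and $\xi_\vt$ otherwise, closes the gap.
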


\begin{proof}
In view of Theorem \ref{t.rec}, it is sufficient to prove that  $I(A^+(B_n)^{^\circ})$ is  a subsemigroup of $A^+(B_n)^{^\circ}$. Let $f, g \in I(A^+(B_n)^{^\circ})$. Note that if $f$ or $g$ is a constant map, then $fg$ is also a constant map and hence, $fg$ is an idempotent element.
Otherwise, we consider the following cases to show that $fg \in I(A^+(B_n)^{^\circ})$.
\begin{description}
\item[{\rm\em Case 1}] $f, g \in A^+(B_n)_l$, for $l \in \{1, n\}$. It can be observed that if $f  = g$, then $fg = f$; otherwise, $fg = \xi_\vt$.
\item[{\rm\em Case 2}] $f =$ $\sis{(i, j)}{(i, j)}$ and $g = (k, k; id)$. Observe that  if $j = k$, then $fg = gf = f$; otherwise, $fg = gf = \xi_\vt$.
\end{description}
Thus, the set  $ I(A^+(B_n)^{^\circ})$ is closed with respect to composition. Hence, $A^+(B_n)^{^\circ} $ is an orthodox semigroup.
\end{proof}

For $n \geq 2$, let $N = A^+(B_n) \setminus A^+(B_n)_{n^2 + 1}$. If $f,g  \in N$, then by Proposition \ref{p.vt-su}, $\vt \notin {\rm supp}(f) \cap {\rm supp}(g)$. Hence,  $\vt(fg) = \vt$ so that $|{\rm supp}(fg)| \neq n^2+1$. Thus, $N$ is closed with respect to composition. From the proof of Theorem \ref{t.rec}, it can be observed that $(N, \circ)$ is regular. Also, from the proof of Theorem \ref{t.oc}, the set $I(N)$ is closed with respect to composition. Further, note that $(I(N), \circ)$ is a commutative semigroup. Hence, we have the following theorem.

\begin{theorem}\label{l28}
$(N, \circ)$ is an inverse semigroup.
\end{theorem}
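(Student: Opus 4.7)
The plan is to invoke the classical characterization that a regular semigroup in which the idempotents commute is an inverse semigroup (\cite[Theorem 1.2]{hw76}). Three ingredients must be assembled, two of which have essentially been prepared already in the paragraph preceding the statement.

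First, I would record that $N$ is closed under $\circ$, which follows from Proposition \ref{p.vt-su}: if $f, g \in N$ then $\vt$ lies in neither ${\rm supp}(f)$ nor ${\rm supp}(g)$, so $\vt(fg) = \vt$ and hence $fg \notin A^+(B_n)_{n^2+1}$. Next I would argue regularity of $(N, \circ)$. Remark \ref{a+bnc-fs-subsgp} and Proposition \ref{a+bnc-sis-subsgp} take care of $\{\xi_\vt\}$ and of $A^+(B_n)_1$, and for $f = (k, p; \sigma) \in A^+(B_n)_n$ the inverse $g = (p, k; \sigma^{-1})$ constructed in the proof of Theorem \ref{t.rec} again lies in $N$; so $(N, \circ)$ is regular.

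The substantive step is to verify that $I(N)$ is a commutative subsemigroup. By Theorem \ref{t.ic},
\[ I(N) = \{\xi_\vt\} \cup \left\{\sis{(i,j)}{(i,j)} \mid i, j \in [n]\right\} \cup \{(k, k; id) \mid k \in [n]\}, \]
since the full-support constant idempotents $\xi_\alpha$ with $\alpha \ne \vt$ are excluded from $N$. Closure of $I(N)$ under $\circ$ is Cases 1 and 2 of the proof of Theorem \ref{t.oc} (products involving $\xi_\vt$ are trivially $\xi_\vt$). For commutativity one runs through the pairs: two singleton-support idempotents $\sis{(i,j)}{(i,j)}$ and $\sis{(i',j')}{(i',j')}$ compose symmetrically to the common map when $(i,j) = (i',j')$ and to $\xi_\vt$ otherwise; two $n$-support idempotents $(k,k;id)$ and $(k',k';id)$ compose to $(k,k;id)$ when $k = k'$ and to $\xi_\vt$ when $k \ne k'$; and a mixed pair $\sis{(i,j)}{(i,j)}$ and $(k,k;id)$ yields $\sis{(i,j)}{(i,j)}$ when $j = k$ and $\xi_\vt$ otherwise, in either order.

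I do not foresee a serious obstacle; the only point requiring care is the mixed-type verification, where one must observe that $\sis{(i,j)}{(i,j)} \circ (k,k;id)$ is nonzero precisely when the image $(i,j)$ of the singleton lies in ${\rm supp}((k,k;id)) = \{(l, k) \mid l \in [n]\}$, i.e.\ when $j = k$, and dually that $(k,k;id) \circ \sis{(i,j)}{(i,j)}$ is nonzero precisely when some value $(l,k)$ of $(k,k;id)$ coincides with the support point $(i,j)$ of the singleton, again forcing $j = k$ (with $l = i$). The two conditions and the resulting values then match on both sides, and the theorem follows from the cited characterization.
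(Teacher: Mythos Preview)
Your proposal is correct and follows essentially the same route as the paper: closure via Proposition \ref{p.vt-su}, regularity from the proof of Theorem \ref{t.rec}, and the inverse-semigroup characterization via commuting idempotents. The only difference is that you spell out the commutativity case analysis explicitly, whereas the paper simply asserts that $(I(N),\circ)$ is commutative after noting closure from the proof of Theorem \ref{t.oc}.
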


\section{An example: $A^+(B_2)$}

In this section, we illustrate our results using the affine near-semiring $A^+(B_2)$. First note that $A^+(B_2)$ has 29 elements (cf. Theorem \ref{t2}). The number of $\mathcal{D}$-classes in $A^+(B_2)^{^+}$ is 10 and it is 3 in $A^+(B_2)^{^\circ}$ (cf. Corollary \ref{a+bn-gcl-ct} and Theorem \ref{acd}). The number of $\mathcal{L}$-classes in $A^+(B_2)^{^+}$ is 19 and it is 11 in $A^+(B_2)^{^\circ}$ (cf. Corollary \ref{a+bn-gcl-ct} and Theorem \ref{t.chr-lc}). The number of $\mathcal{R}$-classes in $A^+(B_2)^{^+}$ is 15 and it is 7 in $A^+(B_2)^{^\circ}$ (cf. Corollary \ref{a+bn-gcl-ct} and Theorem \ref{t.chr-rc}). Since the $\mathcal{H}$-relation is trivial on $A^+(B_2)^{^+}$, all the 29 elements are in 29 different classes. The number of $\mathcal{H}$-classes in $A^+(B_2)^{^\circ}$ is 25 (cf. Corollary \ref{t.chr-hc}). All this information along with the respective Green's classes of both the semigroups $A^+(B_2)^{^+}$ and $A^+(B_2)^{^\circ}$ are shown in \textsc{Figure} \ref{f.eb} using egg-box diagrams. Here, following the notations/representaions introduced in this paper, the elements of $A^+(B_2)$ are displayed with their supports and images. Thus, the characterizations of the respective Green's relations can also be crosschecked in this figure. Further, in the figure, the idempotents elements in these semigroups are marked with a * on their left-top corner.

\begin{figure}[htp]
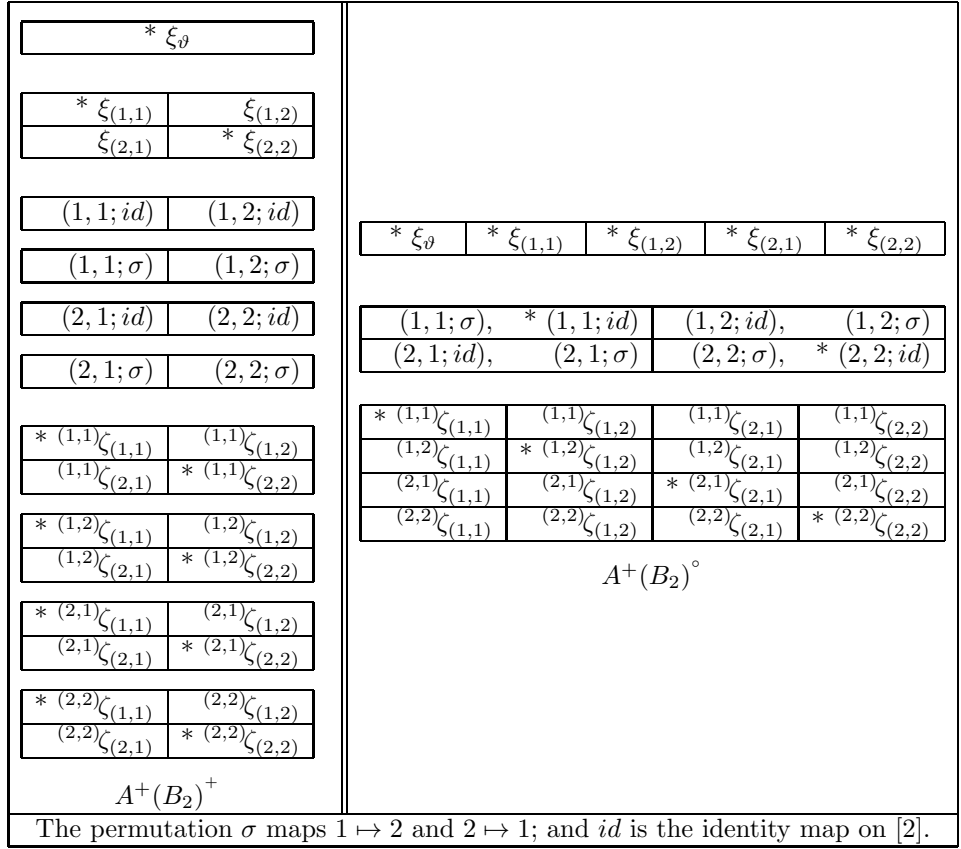

\begin{center}
\begin{tabular}{|c||c|}
\hline
\begin{tabular}{|r|r|}
\noalign{\vskip 0.25cm}
\hline  \multicolumn{2}{|c|}{*  $ \xi_\vt$}   \\
\hline
\noalign{\vskip 0.5cm}
\hline  * $\xi_{(1,1)}$ & $\xi_{(1,2)}$   \\
\hline    $\xi_{(2,1)}$ &  * $\xi_{(2,2)}$   \\
\hline
\noalign{\vskip 0.5cm}
\hline    $(1, 1; id)$ & $(1, 2; id)$   \\ \hline
\noalign{\vskip 0.25cm}
\hline    $(1, 1; \sigma)$ & $(1, 2; \sigma)$   \\ \hline
\noalign{\vskip 0.25cm}
\hline    $(2, 1; id)$ & $(2, 2; id)$   \\ \hline
\noalign{\vskip 0.25cm}
\hline    $(2, 1; \sigma)$ & $(2, 2; \sigma)$   \\ \hline
\noalign{\vskip 0.5cm}
\hline   * $\sis{(1, 1)}{(1, 1)}$ & $\sis{(1, 1)}{(1, 2)}$   \\
\hline    $\sis{(1, 1)}{(2, 1)}$ & * $\sis{(1, 1)}{(2, 2)}$   \\
\hline
\noalign{\vskip 0.25cm}
\hline   * $\sis{(1, 2)}{(1, 1)}$ & $\sis{(1, 2)}{(1, 2)}$   \\
\hline    $\sis{(1, 2)}{(2, 1)}$ & * $\sis{(1, 2)}{(2, 2)}$   \\
\hline
\noalign{\vskip 0.25cm}
\hline   * $\sis{(2, 1)}{(1, 1)}$ & $\sis{(2, 1)}{(1, 2)}$   \\
\hline    $\sis{(2, 1)}{(2, 1)}$ & * $\sis{(2, 1)}{(2, 2)}$   \\
\hline
\noalign{\vskip 0.25cm}
\hline  *  $\sis{(2, 2)}{(1, 1)}$ & $\sis{(2, 2)}{(1, 2)}$   \\
\hline    $\sis{(2, 2)}{(2, 1)}$ &  * $\sis{(2, 2)}{(2, 2)}$   \\
\hline
\noalign{\vskip 0.25cm}
\multicolumn{2}{c}{$A^+(B_2)^{^+}$}\\
\end{tabular}\hspace{.2cm} &
\begin{tabular}{rrrrrrrrrrrrrrrrrrrr}
\hline
\multicolumn{4}{|c|}{* $\xi_\vt$}&\multicolumn{4}{|c|}{* $\xi_{(1,1)}$}&\multicolumn{4}{|c|}{* $\xi_{(1,2)}$}&
\multicolumn{4}{|c|}{* $\xi_{(2,1)}$}&\multicolumn{4}{|c|}{* $\xi_{(2,2)}$}\\
\hline
\noalign{\vskip 0.25cm}
&&&&&&&&&&&&&&&&&&&\\
\hline
\multicolumn{5}{|r}{$(1, 1; \sigma)$,} & \multicolumn{5}{r|}{* $(1, 1; id)$} & \multicolumn{5}{|r}{$(1, 2; id)$,} & \multicolumn{5}{r|}{$(1, 2; \sigma)$}\\
\hline
\multicolumn{5}{|r}{$(2, 1; id)$,} & \multicolumn{5}{r|}{$(2, 1; \sigma)$} & \multicolumn{5}{|r}{$(2, 2; \sigma)$,} & \multicolumn{5}{r|}{ * $(2, 2; id)$}\\
\hline
&&&&&&&&&&&&&&&&&&&\\
\hline
\multicolumn{5}{|r|}{* $\sis{(1, 1)}{(1, 1)}$} & \multicolumn{5}{|r|}{$\sis{(1, 1)}{(1, 2)}$} &\multicolumn{5}{|r|}{$\sis{(1, 1)}{(2, 1)}$} &\multicolumn{5}{|r|}{$\sis{(1, 1)}{(2, 2)}$} \\
\hline
\multicolumn{5}{|r|}{ $\sis{(1, 2)}{(1, 1)}$} & \multicolumn{5}{|r|}{* $\sis{(1, 2)}{(1, 2)}$} &\multicolumn{5}{|r|}{$\sis{(1, 2)}{(2, 1)}$} &\multicolumn{5}{|r|}{$\sis{(1, 2)}{(2, 2)}$} \\
\hline
\multicolumn{5}{|r|}{$\sis{(2, 1)}{(1, 1)}$} & \multicolumn{5}{|r|}{$\sis{(2, 1)}{(1, 2)}$} &\multicolumn{5}{|r|}{* $\sis{(2, 1)}{(2, 1)}$} &\multicolumn{5}{|r|}{$\sis{(2, 1)}{(2, 2)}$} \\
\hline
\multicolumn{5}{|r|}{$\sis{(2, 2)}{(1, 1)}$} & \multicolumn{5}{|r|}{$\sis{(2, 2)}{(1, 2)}$} &\multicolumn{5}{|r|}{$\sis{(2, 2)}{(2, 1)}$} &\multicolumn{5}{|r|}{* $\sis{(2, 2)}{(2, 2)}$} \\
\hline
\noalign{\vskip 0.25cm}
\multicolumn{20}{c}{$A^+(B_2)^{^\circ}$}\\
\end{tabular}\\
\hline
\multicolumn{2}{|c|}{The permutation $\sigma$ maps $1 \mapsto 2$ and $2 \mapsto 1$; and $id$ is the identity map on $[2]$.}\\
\hline
\end{tabular}
\caption{Egg-box Diagrams for $A^+(B_2)^{^+}$ (left) and $A^+(B_2)^{^\circ}$ (right)}\label{f.eb}
\end{center}
\end{figure}

\section*{Acknowledgements}

We would like to express our sincere gratitude to the referee for his/her insightful comments on the paper which have changed the shape of the paper drastically.


\end{document}